\newtheorem{proposition}{Proposition}
\newtheorem{lemma}{Lemma}
\newtheorem{theorem}{Theorem}
\newtheorem{pr}{Question}
\theoremstyle{definition}
\newtheorem{example}{Example}
\theoremstyle{remark}
\newtheorem {remark}{Remark}
\DeclareMathOperator{\rk}{rk} \DeclareMathOperator{\dg}{deg}
\DeclareMathOperator{\spec}{Spec }  \DeclareMathOperator{\aut}{Aut }
\DeclareMathOperator{\td}{tr.deg.}  
  \DeclareMathOperator{\lie}{Lie }
\begin{document}
\date{}
\title{On restriction of roots on affine
$T$-varieties}
\author[Polina Kotenkova]{Polina Yu. Kotenkova}
\address{Department of Higher Algebra, Faculty of Mechanics and Mathematics, Lomonosov Moscow State University, Leninskie Gory~1, Moscow, 119991, Russia}
\email{kotpy@mail.ru} \maketitle

\begin{abstract}
Let $X$ be a normal affine algebraic variety with regular action
of a torus $\mathbb{T}$ and $T\subset\mathbb{T}$ be a subtorus. We
prove that each root of $X$ with respect to $T$ can be obtained by
restriction of some root of $X$ with respect to $\mathbb{T}$. This
allows to get an elementary proof of the description of roots of
the affine Cremona group. Several results on restriction of roots
in the case of subtorus action on an affine toric variety are
obtained.
\end{abstract}

\section*{Introduction }

Let $\mathbb{T}$ be an algebraic torus. An algebraic variety
endowed with an effective action of $\mathbb{T}$ is called a
{\itshape $\mathbb{T}$-variety}. Let~$X$ be an affine
$\mathbb{T}$-variety. The algebra $A=\mathbb{K}[X]$ of regular
functions on $X$ is graded by the lattice $M$ of characters of the
torus $\mathbb{T}$. A derivation $\partial$ on an algebra $A$ is
said to be {\itshape locally nilpotent} (LND) if for each $a\in A$
there exists $n\in\mathbb{N}$ such that $\partial^n(a)=0$. A
locally nilpotent derivation on $A$ is said to be {\itshape
homogeneous} if it respects the $M$-grading. A homogeneous LND
$\partial$ shifts the $M$-grading by some lattice vector $\dg
\partial$. This vector is named the {\itshape degree} of $\partial$. The degrees of homogeneous LNDs
are called {\itshape $\mathbb{T}$-roots} of the
$\mathbb{T}$-variety $X$. This definition imitates in some sense
the notion of a root from Lie Theory. Let~$\mathbb{G}_a$ be the
additive group of the ground field $\mathbb{K}$. It is well known
that LNDs on $A$ are in bijective correspondence with regular
actions of~$\mathbb{G}_a$ on $X$. Namely, for given LND $\partial$
on $A$ the map $\varphi_{\partial}: \mathbb{G}_a\times
A\rightarrow A$,
$\varphi_{\partial}(t,\partial)=\exp(t\partial)(f)$, defines a
$\mathbb{G}_a$-action, and any regular $\mathbb{G}_a$-action on
$X$ arises in this way. It is easy to see that an LND on $A$ is
homogeneous if and only if the corresponding $\mathbb{G}_a$-action
is normalized by the torus~$\mathbb{T}$ in the group~$\aut (X)$.
In these terms, a root is a character by which $\mathbb{T}$ acts
on $\mathbb{G}_a$.

Any derivation on $\mathbb{K}[X]$ extends  to a derivation on the
field of fractions  $\mathbb{K}(X)$ by the Leibniz rule. A
homogeneous LND $\partial$ on $\mathbb{K}[X]$ is said to be of
{\itshape fiber type} if $\partial(\mathbb{K}(X)^{\mathbb{T}})=0$
and of {\itshape horizontal type} otherwise. In other words,
$\partial$ is of fiber type if and only if the general orbits of
corresponding $\mathbb{G}_a$-action on $X$ are contained in the
closures of $\mathbb{T}$-orbits. Recall that for a
$\mathbb{T}$-action on an algebraic variety, the {\itshape
complexity} is defined as the codimension of a general orbit.
Toric varieties are $\mathbb{T}$-varieties of complexity zero. In
this case a description of roots was obtained by
M.~Demazure~\cite{De}, see also \cite[Section 4]{Co}. There is a
complete description of the homogeneous LNDs and roots in the case
of $\mathbb{T}$-actions of complexity at most one due to
A.~Liendo, see \cite{L1}. A classification of homogeneous LNDs of
fiber type in arbitrary complexity is given in \cite{L1'}. The
results of \cite{L1} and \cite{L1'} are based on the combinatorial
description of affine $\mathbb{T}$-varieties obtained by
K.~Altmann and J.~ Hausen, see \cite{AH}.

Let $T\subset\mathbb{T}$ be a subtorus. The torus $T$ also acts on
$X$. Denote by $M_{T}$ and $M_{\mathbb{T}}$ the character lattices
of $T$ and $\mathbb{T}$ respectively. It is obvious that if an LND
respects the $M_{\mathbb{T}}$-grading on $\mathbb{K}[X]$, then it
respects the $M_{T}$-grading as well. So any $\mathbb{T}$-root of
a variety $X$ can be restricted to some $\,T$-root. In this paper
we show that any $\,T$-root of $X$ arises in this way (Theorem
\ref{mt}). Also we study the restriction of roots in the case of
affine toric varieties. For affine toric surfaces a complete
description of this restriction is obtained.

In Sections~\ref{s1} we recall some basic facts on locally
nilpotent derivations and roots. Section~\ref{s2} contains an
elementary proof of surjectivity of restriction of roots.  In
Section~\ref{s3} it is shown how the description of roots of the
affine Cremona group given by A.~Liendo in \cite{L2} can be
obtained by our method. Note that Liendo's proof is based on the
classification of LNDs of horizontal type in the case of
$\mathbb{T}$-actions of complexity one. We give more direct proof.
Section~\ref{s4} is devoted to the study of the restriction of
roots on an affine toric variety. In Section~\ref{s5} the case of
affine toric surfaces is considered in more detail.

We work over an algebraically closed field $\mathbb{K}$ of
characteristic zero.

\section{\label{s1} Homogeneous locally nilpotent derivations}

Let $X$ be an affine variety with an effective action of an
algebraic torus $\mathbb{T}$ and $A=\mathbb{K}[X]$ be the algebra
of regular functions on $X$. Denote by $M$ the character lattice
of the torus~$\mathbb{T}$ and by $N$ the lattice of one-parameter
subgroups of $\mathbb{T}$. It is well known that there is a
bijective correspondence between effective $\mathbb{T}$-actions on
$X$ and effective $M$-gradings on~$A$. In fact, the algebra $A$ is
graded by a semigroup of lattice points in some convex polyhedral
cone $\omega\subseteq
M_{\mathbb{Q}}=M\otimes_{\mathbb{Z}}\mathbb{Q}$ called the
{\itshape weight cone}. So we have $$A=\bigoplus_{m\in \omega_{
M}} A_m\chi^m,$$ where $\omega_{ M}=\omega\cap M$ and $\chi^m$ is
the character corresponding to $m$.

A derivation $\partial$ on an $M$-graded algebra $A$ is called
{\itshape homogeneous} if it sends homogeneous elements to
homogeneous ones.  If ${a,b\in A\backslash \ker \partial}$ are
some homogeneous elements, then
${\partial(ab)=a\partial(b)+\partial(a)b}$ is homogeneous too and
hence ${\dg
\partial(a)-\dg a=\dg
\partial(b)-\dg b}$. So any homogeneous
derivation $\partial$ has a well defined {\itshape degree} given
as $\dg\partial=\dg
\partial(a)-\dg a$ for any homogeneous $a\in A\backslash \ker
\partial$.
Homogeneous LNDs are called the {\itshape root vectors} with
respect to the torus $\mathbb{T}=\spec \mathbb{K}[M]$ and their
degrees are said to be the {\itshape $\mathbb{T}$-roots}.

The following well known lemma shows that any LND decomposes into
a sum of homogeneous derivations, some of which are locally
nilpotent, cf.~\cite[Lemma 1.10]{L1}.

\begin{lemma} \label{dec} Let $A$ be a finitely generated $M$-graded
domain. For any derivation $\partial$ on $A$ there is a
decomposition \begin{equation} \label{f2}
\partial=\sum\limits_{e\in M}
\partial_e,\end{equation} where $\partial_e$ is a homogeneous derivation of
degree $e$. Moreover, the convex hull $\Delta(\partial)\subset
M_{\mathbb{Q}}$ of the set ${\{e\in M\mid \partial_e\ne 0\}}$ is a
polytope and for every vertex $e$ of $\Delta(\partial)$, the
derivation $\partial_e$ is locally nilpotent if $\partial$ is.
\end{lemma}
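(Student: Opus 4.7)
The plan is to build the decomposition component by component and then handle the vertex statement with a standard Newton-polytope style argument.

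First, for each homogeneous $a \in A$ I would decompose $\partial(a)$ into its $M$-homogeneous parts and \emph{define} $\partial_e(a)$ to be the homogeneous component of $\partial(a)$ of degree $\dg a + e$; this definition is then extended $\mathbb{K}$-linearly to all of $A$ using the direct sum decomposition $A=\bigoplus_{m\in\omega_M} A_m\chi^m$. Verifying the Leibniz rule reduces to homogeneous $a,b$: multiplication by homogeneous elements merely shifts degrees, so the degree $\dg a+\dg b+e$ component of $\partial(ab)=\partial(a)b+a\partial(b)$ is exactly $\partial_e(a)b+a\partial_e(b)$. This gives $\partial=\sum_{e\in M}\partial_e$ with $\partial_e$ a homogeneous derivation of degree $e$.

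Next I would use finite generation to turn $\Delta(\partial)$ into a polytope. Fix generators $a_1,\dots,a_k$ of $A$; each $\partial(a_i)$ has only finitely many non-zero homogeneous components, so only finitely many $e\in M$ can satisfy $\partial_e(a_i)\neq 0$ for some $i$. Since a derivation is determined by its values on generators, only finitely many $\partial_e$ are non-zero, and hence $\Delta(\partial)$ is a convex hull of a finite set, i.e.\ a polytope.

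Finally, for a vertex $e$ of $\Delta(\partial)$ I would descend local nilpotency from $\partial$ to $\partial_e$. Expanding yields
\[
\partial^n \;=\; \sum_{(e_1,\dots,e_n)} \partial_{e_n}\circ\cdots\circ\partial_{e_1},
\]
where each $e_i$ ranges over the finite set $S=\{e\in M:\partial_e\neq 0\}\subset\Delta(\partial)$, and on a homogeneous $a$ the $(e_1,\dots,e_n)$-summand is homogeneous of degree $\dg a+e_1+\cdots+e_n$. Because $e$ is a vertex of $\Delta(\partial)$, a separating linear functional $\ell$ attains its maximum over $\Delta(\partial)$ (and hence over $S$) uniquely at $e$; applying $\ell$ shows that the only tuple in $S^n$ with sum $ne$ is $(e,\dots,e)$. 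Therefore the homogeneous component of $\partial^n(a)$ of degree $\dg a+ne$ is exactly $\partial_e^n(a)$. Local nilpotency of $\partial$ now forces $\partial_e^n(a)=0$ for $n\gg 0$ on each homogeneous $a$, and the result extends to arbitrary $a\in A$ since every element is a finite sum of homogeneous pieces.

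The main obstacle is the last step: one must prevent mixed compositions $\partial_{e_n}\circ\cdots\circ\partial_{e_1}$ of total degree $ne$ from cancelling the pure term $\partial_e^n(a)$ inside $\partial^n(a)$, and vertex extremality, via the observation that $ne$ is a vertex of the Minkowski sum $n\,\Delta(\partial)$, is precisely what rules out any mixed contribution. Everything else is bookkeeping with the $M$-grading.
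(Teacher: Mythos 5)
Your proposal is correct and follows essentially the same route as the paper: defining $\partial_e$ via the homogeneous components of $\partial(a)$, using finite generation to see only finitely many $\partial_e$ are nonzero, and identifying $\partial_e^n(a)$ as the degree-$(\dg a + ne)$ component of $\partial^n(a)$ at a vertex $e$. The only difference is that you spell out the vertex-extremality argument (via a separating linear functional) that the paper leaves implicit.
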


\begin{proof} For every homogeneous $a\in A$ there is a
decomposition ${\partial (a)=\sum_{e\in M} a_{\dg (a)+e}}$, where
the element $a_{\dg (a)+e}$ is homogeneous of degree $\dg (a)+e$.
The linear map $\partial_e$ given by the rule
$\partial_e(a)=a_{\dg (a)+e}$ is a homogeneous derivation of
degree $e$, and ${\partial=\sum_{e\in M}
\partial_e}$. Since $A$ is finitely generated, the set $\{e\in M\mid \partial_e\ne
0\}$ is finite and its convex hull $\Delta(\partial)$ is a
polytope. Let $\widehat{e}$ be a vertex of $\Delta(\partial)$ and
$a\in A_m$ be some homogeneous element. Then $\partial^n(a)$ has
$\partial_{\widehat{e}}^n(a)$ as its homogeneous component of
degree $m+n\widehat{e}$. So if $\partial$ is LND, then
$\partial_{\widehat{e}}$ is LND as well.
\end{proof}

Let $X$ be an affine toric variety, i.~e. a normal affine variety
with a generically transitive action of a torus $\mathbb{T}$. In
this case
$$A=\bigoplus_{m\in \omega_{M}} \mathbb{K}\chi^m=\mathbb{K}[\omega_{M}]$$
is the semigroup algebra.  Recall that for given cone
$\omega\subset M_{\mathbb{Q}}$, its {\itshape dual cone} is
defined by
$$\sigma=\{n\in N_{\mathbb{Q}}\,|\,\langle
n,p\rangle\geqslant0\,\,\,\forall p\in\omega\},$$ where
$\langle,\rangle$ is the pairing between dual lattices $N$ and
$M$. Let $\sigma(1)$ be the set of rays of a cone $\sigma$ and
$n_{\rho}$ be the primitive lattice vector on the ray $\rho$. By
definition, for $\rho\in\sigma(1)$ set
$$S_{\rho}:=\{e\in M\,|\, \langle n_{\rho},e\rangle=-1
\,\,\mbox{and}\,\, \langle n_{\rho'},e\rangle\geqslant0
\,\,\,\,\forall\,\rho'\in \sigma(1), \,\rho'\ne\rho\}.$$ One
easely checks that the set $S_{\rho}$ is infinite for any
$\rho\in\sigma(1)$. The elements of the set
$\mathfrak{R}:=\bigsqcup\limits_{\rho} S_{\rho}$ are called the
{\itshape Demazure roots} of $\sigma$. This notion was introduced
in \cite{De}. Let $e\in S_{\rho}$. One can define the homogeneous
LND on the algebra $A$ by the rule
$$\partial_e(\chi^m)=\langle n_{\rho},m\rangle\chi^{m+e}.$$ It
turns out that the set $\{\alpha\partial_e\,|\, \alpha\in
\mathbb{K}, e\in \mathfrak{R}\}$ coincides with the set of all
homogeneous LNDs on~$A$, see \cite[Theorem 2.7]{L1}.

\begin{example} \label{e1} Consider $X=\mathbb{A}^d$ with the standard action of the torus
$(\mathbb{K}^{\times})^d$. It is a toric variety with the cone
$\sigma=\mathbb{Q}^d_{\geqslant0}$ having rays
$\rho_1=\langle(1,0,\ldots,0)\rangle_{\mathbb{Q}_{\geqslant0}},\ldots,\rho_d=\langle(0,0,\ldots,0,1)\rangle_{\mathbb{Q}_{\geqslant0}}$.
The dual cone $\omega$ is $\mathbb{Q}^d_{\geqslant0}$ as well. In
this case
$$S_{\rho_i}=\{(e_1,\ldots,e_{i-1},-1,e_{i+1},\ldots,e_d)\,|\,e_j\in\mathbb{Z}_{\geqslant0}\}.$$
\vspace{0.05cm}
\begin{center}
\begin{picture}(100,75)
\multiput(50,15)(15,0){5}{\circle*{3}}
\multiput(35,30)(0,15){4}{\circle*{3}}
\put(20,30){\vector(1,0){100}} \put(50,5){\vector(0,1){80}}
\put(17,70){$S_{\rho_1}$} \put(115,7){$S_{\rho_2}$}
\put(100,70){$M_{\mathbb{Q}}=\mathbb{Q}^2$} \linethickness{0.5mm}
\put(50,30){\line(1,0){65}} \put(50,30){\line(0,1){50}}
\end{picture}
\end{center}
Denote
$x_1=\chi^{(1,0,\ldots,0)},\ldots,x_d=\chi^{(0,\ldots,0,1)}$. Then
$\mathbb{K}[X]=\mathbb{K}[x_1,\ldots,x_d]$. It is easy to see that
the homogeneous LND corresponding to the root
$e=(e_1,\ldots,e_d)\in S_{\rho_i}$ is
$$\partial_e=x_1^{e_1}\ldots x_{i-1}^{e_{i-1}} x_{i+1}^{e_{i+1}}\ldots x_{d}^{e_{d}}\frac{\partial}{\partial x_i}.$$
\end{example}

\section{\label{s2} Restriction of roots}

Consider an affine $\mathbb{T}$-variety $X$ and a subtorus
$T\subset\mathbb{T}$. The torus $T$ also acts on $X$. Evidently,
every $\mathbb{T}$-homogeneous LND on $\mathbb{K}[X]$ is
$T$-homogeneous and so any $\mathbb{T}$-root of $X$ can be
restricted to some $T$-root. The following theorem proves that
every $T$-root arises in this way.

\begin{theorem} \label{mt} Let $X$ be an affine variety with a regular action of a torus
$\mathbb{T}$ and $T\subset\mathbb{T}$ be a subtorus. Then the
restriction of  $\,\mathbb{T}$-roots to  $T$-roots is surjective.
Moreover, if a $T$-root $e$ is the restriction of the only one
$\mathbb{T}$-root, then any $T$-homogeneous LND on $\mathbb{K}[X]$
of degree~$e$ is $\mathbb{T}$-homogeneous as well.
\end{theorem}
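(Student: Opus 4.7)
The plan is to apply Lemma~\ref{dec} with respect to the $\mathbb{T}$-grading, which is a refinement of the $T$-grading. Let $\pi\colon M_{\mathbb{T}}\to M_T$ be the restriction map dual to the inclusion $T\hookrightarrow\mathbb{T}$. Every $\mathbb{T}$-homogeneous derivation of $\mathbb{T}$-degree $d$ is automatically $T$-homogeneous of $T$-degree $\pi(d)$, simply because the $\mathbb{T}$-weight spaces of $A=\mathbb{K}[X]$ refine the $T$-weight spaces.

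Given a $T$-root $e$, I would start with any nonzero $T$-homogeneous LND $\partial$ on $A$ of $T$-degree $e$ and consider its $M_{\mathbb{T}}$-decomposition $\partial=\sum_{d\in M_{\mathbb{T}}}\partial_d$ provided by Lemma~\ref{dec}. The first step is to check that only indices $d\in\pi^{-1}(e)$ contribute: grouping the summands by $\pi(d)$ writes $\partial$ as a sum of $T$-homogeneous derivations of pairwise distinct $T$-degrees, and since $\partial$ is itself $T$-homogeneous of $T$-degree $e$, uniqueness of this decomposition kills every other group. Hence the polytope $\Delta(\partial)$ lies in the affine fibre of $\pi\otimes\mathbb{Q}$ over $e$.

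The vertex part of Lemma~\ref{dec} then does the rest: at every vertex $\widehat{e}$ of $\Delta(\partial)$ the nonzero derivation $\partial_{\widehat{e}}$ is locally nilpotent, so $\widehat{e}$ is a $\mathbb{T}$-root with $\pi(\widehat{e})=e$. Picking any vertex yields surjectivity. For the ``moreover'' assertion, note that distinct vertices produce distinct $\mathbb{T}$-roots above $e$; if only one $\mathbb{T}$-root lies above $e$, then $\Delta(\partial)$ has a unique vertex, and a bounded polytope with a single vertex is a point, forcing $\partial=\partial_{\widehat{e}}$ to be $\mathbb{T}$-homogeneous. The only real obstacle is conceptual: recognising that Lemma~\ref{dec} together with its vertex property is exactly the tool needed, so that refining the grading automatically produces a bona fide $\mathbb{T}$-root above the chosen $T$-root. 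Once this is in view the argument reduces essentially to a one-line application of that lemma.
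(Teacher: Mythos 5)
Your proposal is correct and follows essentially the same route as the paper: decompose the $T$-homogeneous LND into $\mathbb{T}$-homogeneous pieces via Lemma~\ref{dec} and use the vertex property of $\Delta(\partial)$ to produce a $\mathbb{T}$-root restricting to $e$, with the uniqueness case forcing a single summand. You in fact spell out two points the paper leaves implicit (that all contributing degrees lie in the fibre over $e$, and that a polytope with one vertex is a point), which only strengthens the argument.
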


\begin{proof} Let $e$ be a $T$-root of $X$ and $\partial$ be a homogeneous LND of degree~$e$. By Lemma
\ref{dec}, $\partial$~decomposes into a sum of
$\mathbb{T}$-homogeneous derivations, some of which are locally
nilpotent. So degree of any locally nilpotent summand is the
$\mathbb{T}$-root of $X$ and its restriction to $T$ equals $e$. If
$e$ is the restriction of the only one $\mathbb{T}$-root, then
there is only one summand in the decomposition of $\partial$, and
hence $\partial$ is $\mathbb{T}$-homogeneous.
\end{proof}

Now we consider an affine toric variety $X$ with the acting torus
$\mathbb{T}$ and a subtorus $T\subset\mathbb{T}$ of codimension
one. Let $M$ be the character lattice of $\mathbb{T}$ and $N$ be
the lattice of  one-parameter subgroups of $\mathbb{T}$. Denote by
$\sigma_{X}\subset N_{\mathbb Q}$ the cone corresponding to $X$
and by $\Gamma_{T}\subset N_{\mathbb Q}$ the hyperplane generated
by one-parameter subgroups of $\mathbb{T}$ that are contained in
the subtorus $T$.

\begin{proposition} \label{sl} Let $X$ be an affine toric variety with an acting torus
$\mathbb{T}$ and $T\subset\mathbb{T}$ be a subtorus of codimension
one. If\,\, ${\Gamma_{T}\cap \sigma_{X}=\{0\}}$, then the
restriction of roots is bijective. In particular, there is only
one (up to scalar) root vector for each $T$-root $e$ and any
$T$-homogeneous LND on $\mathbb{K}[X]$ is $\mathbb{T}$-homogeneous
as well.
\end{proposition}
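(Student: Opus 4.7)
The plan is to combine the Demazure description of roots on toric varieties (recalled at the end of Section~\ref{s1}) with the \emph{moreover} clause of Theorem~\ref{mt}. Surjectivity of the restriction on roots is already provided by Theorem~\ref{mt}, so the entire proposition will follow once I establish injectivity of the restriction on the set of $\mathbb{T}$-roots.

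I would begin by translating the hypothesis into lattice language. Since $T$ has codimension one in $\mathbb{T}$, the kernel of the projection $M \to M_T$ is a rank-one sublattice $\mathbb{Z} m_0 \subset M$ for some primitive $m_0$, and $\Gamma_T \subset N_{\mathbb{Q}}$ coincides with the hyperplane $\{n \in N_{\mathbb{Q}} : \langle n, m_0 \rangle = 0\}$. The assumption $\Gamma_T \cap \sigma_X = \{0\}$ then places all primitive ray generators $n_\rho$ strictly on one side of this hyperplane; replacing $m_0$ by $-m_0$ if necessary, I may assume $\langle n_\rho, m_0 \rangle > 0$ for every $\rho \in \sigma_X(1)$.

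For injectivity, consider two Demazure roots $e_1 \in S_{\rho_1}$ and $e_2 \in S_{\rho_2}$ with the same image in $M_T$, so that $e_1 - e_2 = k m_0$ for some $k \in \mathbb{Z}$. If $\rho_1 = \rho_2 = \rho$, pairing with $n_\rho$ gives $k \langle n_\rho, m_0 \rangle = -1 - (-1) = 0$, hence $k = 0$ and $e_1 = e_2$. If $\rho_1 \ne \rho_2$, pairing with $n_{\rho_1}$ yields $k \langle n_{\rho_1}, m_0 \rangle = -1 - \langle n_{\rho_1}, e_2 \rangle \leq -1$, forcing $k < 0$, while pairing with $n_{\rho_2}$ yields $k \langle n_{\rho_2}, m_0 \rangle \geq 1$, forcing $k > 0$, a contradiction. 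I expect this short case analysis to be the only nontrivial step of the argument; the geometric hypothesis $\Gamma_T \cap \sigma_X = \{0\}$ enters precisely through the uniform positivity of the numbers $\langle n_\rho, m_0 \rangle$.

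Once the restriction of roots is bijective, every $T$-root has a unique $\mathbb{T}$-preimage, so the \emph{moreover} clause of Theorem~\ref{mt} ensures that every $T$-homogeneous LND on $\mathbb{K}[X]$ is in fact $\mathbb{T}$-homogeneous. By the Demazure description, a homogeneous LND of any fixed $\mathbb{T}$-root is unique up to scalar, and the corresponding uniqueness statement for $T$-homogeneous root vectors of degree $e$ follows at once.
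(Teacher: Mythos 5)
Your proposal is correct and follows essentially the same route as the paper: identify the kernel of $M\to M_T$ with $\mathbb{Z}m_T$, use $\Gamma_T\cap\sigma_X=\{0\}$ to get $\langle n_\rho,m_T\rangle>0$ for all rays, and derive a sign contradiction from the defining inequalities of the sets $S_\rho$ before invoking the second part of Theorem~\ref{mt}. The only cosmetic difference is that you split the injectivity check into the cases $\rho_1=\rho_2$ and $\rho_1\ne\rho_2$, whereas the paper handles both at once by normalizing $e_1-e_2=\lambda m_T$ with $\lambda>0$ and pairing with a single $n_\rho$.
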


\begin{proof}Let $\langle \cdot, m_{T}\rangle=0$
be the equation of the  hyperplane $\Gamma_{T}$, where $m_{T}\in
M$. The vector $m_{T}$ generates the kernel of the restriction of
characters from $\mathbb{T}$ to $T$. Since ${\Gamma_{T}\cap
\sigma_{X}=\{0\}}$, we may assume that $\langle p,m_{T}\rangle>0$
for all $p\in \sigma_{X}\backslash\{0\}$. Suppose the restrictions
of the roots $e_1$ and $e_2$ from $\mathbb{T}$ to $T$ coincide and
$e_1-e_2=\lambda m_{T}$, where~$\lambda>0$. Let $\rho$ be the ray
of $\sigma_X$ with $e_1\in S_{\rho}$. Then $\langle
n_{\rho},e_1-e_2\rangle=-1-\langle n_{\rho},e_2\rangle\leqslant
0$. On the other hand, $\langle
n_{\rho},e_1-e_2\rangle=\lambda\langle n_{\rho},m_T\rangle>0$.
This contradiction implies that the restriction of roots from
$\mathbb{T}$ to $T$ is injective. By Theorem \ref{mt}, any
$T$-homogeneous LND on $\mathbb{K}[X]$ is
$\mathbb{T}$-homogeneous.
\end{proof}

\begin{remark}  In Proposition \ref{sl} we can not replace a subtorus of codimension one  with a
subtorus of arbitrary codimension. For example, let
$X=\mathbb{A}^3$ and $T=\{(t,t,t^{-1})\mid t\in
\mathbb{T}^{\times}\}$. Denote by $\Gamma_{T}$ the line
corresponding to $T$. Then $\Gamma_{T}=\mathbb{Q}\cdot(1,1,-1)$
and $\Gamma_{T}\cap \sigma_{X}=\{0\}$, but the restriction of
roots is not bijective. Indeed, the $T$-homogeneous LND
$\partial=x_2x_3\frac{\partial}{\partial
x_1}+\frac{\partial}{\partial x_2}$ is not
$\mathbb{T}$-homogeneous.
\end{remark}

\section{\label{s3} Roots of the affine Cremona group}

Consider the polynomial algebra
$\mathbb{K}^{[n]}=\mathbb{K}[x_1,\ldots,x_n]$. The set of "volume
preserving" transformations $$\aut^{*}_{\mathbb{K}}
\mathbb{K}^{[n]}=\{\gamma\in\aut_{\mathbb{K}}
\mathbb{K}^{[n]}\,|\, \det \left(\frac{\partial
\gamma(x_j)}{\partial x_i}\right)_{1\leqslant i,j\leqslant
n}=1\}$$ is a closed normal subgroup of the {\itshape affine
Cremona group} $\aut_{\mathbb{K}} \mathbb{K}^{[n]}$. It is an
infinite dimensional simple algebraic group, see \cite{Sh}. Every
maximal algebraic torus in $\aut^{*}_{\mathbb{K}}
\mathbb{K}^{[n]}$ has dimension $n-1$ and is conjugate to
$$T=\{\gamma\in\aut^{*}_{\mathbb{K}} \mathbb{K}^{[n]}\,|\, \gamma(x_i)=t_ix_i,\, t_i\in
\mathbb{K},\, \prod_{i=1}^{n}t_i=1\}.$$ If $\partial$ is an LND on
$\mathbb{K}^{[n]}$, then $\exp t\partial\in\aut^{*}_{\mathbb{K}}
\mathbb{K}^{[n]}$ for any $t\in \mathbb{K}$, and so
$\partial\in\lie(\aut^{*}_{\mathbb{K}} \mathbb{K}^{[n]})$. A
nonzero LND $\partial$ of $\mathbb{K}^{[n]}$ is called a {\itshape
root vector} of $\aut^{*}_{\mathbb{K}} \mathbb{K}^{[n]}$ with
respect to $T$ if there exists a nontrivial character $\chi:
T\rightarrow \mathbb{K}^{\times}$ such that
$$\gamma\circ\partial\circ\gamma^{-1}=\chi(\gamma)\partial
\,\,\,\,\,\forall\, \gamma\in T.$$ The character $\chi$ is said to
be the {\itshape root} of $\aut^{*}_{\mathbb{K}} \mathbb{K}^{[n]}$
with respect to $T$ corresponding to $\partial$. It is easy to see
that two last definitions are equivalent to the definitions of
root vectors and roots given in Section~\ref{s2} in the case
$X=\mathbb{A}^n$. The first Popov's question in \cite{Po} was to
find all roots and root vectors of $\aut^{*}_{\mathbb{K}}
\mathbb{K}^{[n]}$ with respect to~$T$. The answer is given in
\cite{L2}. It claims that the root vectors are exactly the LNDs
$x^{\alpha}\frac{\partial}{\partial x_i}$ , where $x^{\alpha}$ is
any monomial not depending on $x_i$. Let us show that this
statement immediately follows from Proposition \ref{sl}. We have
an affine toric variety~$\mathbb{A}^n$ with the standard action of
the torus $\mathbb{T}=(\mathbb{K}^{\times})^n$ and the subtorus
$T\subset\mathbb{T}$ given by equation
$\prod\limits_{i=1}^{n}t_i=1$. The cone corresponding to
$\mathbb{A}^n$ is just $\sigma_{X}=\mathbb{Q}^n_{\geqslant0}$. The
hyperplane $\Gamma_{T}$ is given by equation $z_1+\ldots+z_n=0$
and we get $\Gamma_{T}\cap \sigma_{X}=\{0\}$. Thus all root
vectors are homogeneous with respect to $\mathbb{T}$ and hence, as
it is shown in Example~\ref{e1}, have the form $\lambda\,
x^{\alpha}\frac{\partial}{\partial x_i}$, where $\lambda\in
\mathbb{K}^{\times}$. So we obtain the following theorem.

\begin{theorem} The set of root vectors of $\aut^{*}_{\mathbb{K}} \mathbb{K}^{[n]}$ with respect to
$T$ coincides with the set $${\{\lambda\,
x^{\alpha}\frac{\partial}{\partial x_i}\,\mid\,\lambda\in
\mathbb{K}^{\times}, i\in\{1,\ldots,n\},\alpha\in
\mathbb{Z}^n_{\geqslant 0}, \alpha_i=0 \}}.$$ The root
corresponding to $\lambda\, x^{\alpha}\frac{\partial}{\partial
x_i}$ is the character $\chi_{i,\alpha}: T\rightarrow
\mathbb{K}^{\times}$ given by
$\chi_{i,\alpha}(\gamma)=t_i^{-1}\prod\limits_{j=1}^{n}t_j^{\alpha_j}.$
\end{theorem}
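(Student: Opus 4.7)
The plan is to realise the pair $(T\subset\mathbb{T})$ from the statement as a codimension-one subtorus of the standard torus acting on $X=\mathbb{A}^n$, and then to feed this setup into Proposition \ref{sl}. First I would identify the combinatorial data: the cone of $\mathbb{A}^n$ is $\sigma_X=\mathbb{Q}^n_{\geqslant 0}$, and the hyperplane $\Gamma_T\subset N_{\mathbb Q}$ spanned by one-parameter subgroups of $\mathbb{T}$ lying in $T$ is cut out by $z_1+\cdots+z_n=0$. Since the coordinate-sum functional is strictly positive on every nonzero point of $\sigma_X$, the intersection $\Gamma_T\cap\sigma_X$ is trivial, so the geometric hypothesis of Proposition \ref{sl} is satisfied.

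Second, Proposition \ref{sl} then guarantees that every $T$-homogeneous LND on $\mathbb{K}[\mathbb{A}^n]$ is already $\mathbb{T}$-homogeneous, and that the restriction of roots from $\mathbb{T}$ to $T$ is bijective (with each root vector unique up to scalar). This reduces the classification of root vectors of $\aut^{*}_{\mathbb{K}}\mathbb{K}^{[n]}$ with respect to $T$ to the known classification of $\mathbb{T}$-homogeneous LNDs on the toric variety $\mathbb{A}^n$. But the latter has been recorded in Example \ref{e1}: the root vectors are exactly the scalar multiples of $x^\alpha\frac{\partial}{\partial x_i}$ with $\alpha\in\mathbb{Z}^n_{\geqslant 0}$ and $\alpha_i=0$, corresponding to Demazure roots in $S_{\rho_i}$.

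Third, it remains to identify the character by which $T$ acts on such a root vector. Writing a general $\gamma\in T$ as $\gamma(x_j)=t_jx_j$ with $\prod_j t_j=1$, I would compute $\gamma\circ\bigl(x^\alpha\frac{\partial}{\partial x_i}\bigr)\circ\gamma^{-1}$ by evaluating on $x_i$: the chain $\gamma^{-1}(x_i)=t_i^{-1}x_i\mapsto t_i^{-1}x^\alpha\mapsto t_i^{-1}\bigl(\prod_j t_j^{\alpha_j}\bigr)x^\alpha$ yields the scalar $\chi_{i,\alpha}(\gamma)=t_i^{-1}\prod_j t_j^{\alpha_j}$, which is the asserted character.

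There is essentially no obstacle here; all the substantive work has been absorbed into Proposition \ref{sl} and Example \ref{e1}. The only points worth double-checking are that $T$ genuinely has codimension one in $\mathbb{T}$ (immediate from $\dim T=n-1$) and that the two notions of root — the $T$-homogeneity of an LND on $\mathbb{K}[X]$ and the conjugation-eigenvector condition used to define roots of $\aut^{*}_{\mathbb{K}}\mathbb{K}^{[n]}$ — really agree, a compatibility already noted in the text preceding the theorem.
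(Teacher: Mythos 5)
Your proposal is correct and follows essentially the same route as the paper: both verify that $\Gamma_T$ is the hyperplane $z_1+\cdots+z_n=0$, so $\Gamma_T\cap\sigma_X=\{0\}$, invoke Proposition \ref{sl} to conclude every $T$-homogeneous LND is $\mathbb{T}$-homogeneous, and then read off the list of root vectors from Example \ref{e1}. Your explicit conjugation computation of the character $\chi_{i,\alpha}$ is a small addition the paper leaves implicit, but it changes nothing of substance.
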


\section{\label{s4} Subtorus actions on affine toric varieties}

Let us start with three questions concerning roots on affine
$T$-varieties. The first one was communicated to us by A.~Liendo.

\begin{pr} Let $e$ be a root of an affine $T$-variety. How many root vectors
correspond to the root~$e$?
\end{pr}

As we have seen, in the toric case there is only one (up to
scalar) root vector for each root. It follows from \cite[Theorem
2.4]{L1'} that root vectors of fiber type having degree $e$ form
the vector space (of finite or infinite dimension). For root
vectors of horizontal type there is no complete answer even for
$T$-action of complexity one.

\begin{pr} Let $X$ be an affine $\mathbb{T}$-variety
 and $T\subset\mathbb{T}$ be a subtorus. Consider a
$T$-root $e$ of $X$. How many $\mathbb{T}$-roots  have $e$ as
their restrictions to $T$?
\end{pr}

\begin{pr} \label{qu3} Are all $T$-homogeneous LNDs of degree $e$ on $\mathbb{K}[X]$ homogeneous with respect
to $\mathbb{T}$?
\end{pr}

Theorem \ref{mt} shows that if a $T$-root $e$ is the restriction
of only one $\mathbb{T}$-root $\widehat{e}$, then any
$T$-homogeneous LNDs of degree $e$ is $\mathbb{T}$-homogeneous and
there are as many root vectors corresponding to $e$ as root
vectors corresponding to $\widehat{e}$.

Let us investigate these questions in the toric case. Consider an
affine toric variety $X$ with the acting torus $\mathbb{T}$ and a
subtorus $T\subset\mathbb{T}$ of codimension one. As above, denote
by $N$ the lattice of  one-parameter subgroups, by $M$  the
character lattice of $\mathbb{T}$, by $\sigma_{X}\subset
N_{\mathbb{Q}}$ the cone corresponding to $X$, and by $\Gamma_{T}$
the hyperplane corresponding to subtorus~$T$. Let $\langle \cdot,
m_{T}\rangle=0$ be the equation of the hyperplane $\Gamma_{T}$,
where $m_{T}\in M$. The set of $\mathbb{T}$-roots of the variety
$X$ is the set
$\frak{R}_{\mathbb{T}}=\coprod\limits_{\rho\in\sigma_{X}(1)}
S_{\rho}$ of Demazure roots of the cone $\sigma_{X}$. Let
$\frak{R}_{T}$ be the set of $T$-roots and $\pi:
\frak{R}_{\mathbb{T}} \rightarrow \frak{R}_{T}$ be the restriction
of roots.

The answers to Questions $1$-$3$ depend on relative position of
the cone $\sigma_{X}$ and the hyperplane~$\Gamma_{T}$. We already
have studied the case $\sigma_{X}\cap\Gamma_{T}=\{0\}$. By
Proposition \ref{sl} there is only one root vector for every
$T$-root $e$, only one $\mathbb{T}$-root has $e$ as its
restriction, and any $T$-homogeneous LND is
$\mathbb{T}$-homogeneous as well. The following three propositions
describe the restriction of roots in other cases.

\vspace{0.1cm}

\begin{proposition} \label{sl''} If $\Gamma_{T}$ intersects the interior of the cone $\sigma_{X}$
and does not contain the rays of $\sigma_{X}$, then any $T$-root
is the restriction of at most two $\mathbb{T}$-roots.
\end{proposition}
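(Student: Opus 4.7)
The plan is to exploit the explicit inequalities defining the sets $S_\rho$ together with the hypothesis on $\Gamma_T$ to derive sign constraints that rule out three collinear $\mathbb{T}$-roots sharing a single $T$-restriction. First, note that the kernel of the restriction $M\to M_T$ is $\mathbb{Z} m_T$, so two $\mathbb{T}$-roots $e_1,e_2$ with $\pi(e_1)=\pi(e_2)$ satisfy $e_1-e_2=\lambda m_T$ for some $\lambda\in\mathbb{Z}$. Because $\Gamma_T$ avoids every ray of $\sigma_X$, we have $\langle n_\rho,m_T\rangle\neq 0$ for each $\rho\in\sigma_X(1)$; because $\Gamma_T$ meets the interior of $\sigma_X$, both signs must occur among these pairings.

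The key technical step is the following sign lemma, which I would extract from the same computation that appears in the proof of Proposition~\ref{sl}: if $e_1\in S_{\rho_1}$ and $e_2\in S_{\rho_2}$ are distinct $\mathbb{T}$-roots with $e_1-e_2=\lambda m_T$, $\lambda>0$, then $\langle n_{\rho_1},m_T\rangle<0$ and $\langle n_{\rho_2},m_T\rangle>0$. Indeed, pairing $e_1-e_2=\lambda m_T$ against $n_{\rho_1}$ gives
$$\lambda\langle n_{\rho_1},m_T\rangle=-1-\langle n_{\rho_1},e_2\rangle\leqslant -1<0,$$
and symmetrically against $n_{\rho_2}$ gives $\lambda\langle n_{\rho_2},m_T\rangle\geqslant 1>0$. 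In particular $\rho_1\neq\rho_2$; this also takes care of the potential subtlety that two $\mathbb{T}$-roots with the same restriction might share a ray, since for $\rho_1=\rho_2=\rho$ one would get $\lambda\langle n_\rho,m_T\rangle=0$ with both factors nonzero.

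To finish, suppose for contradiction that three distinct $\mathbb{T}$-roots $e_1,e_2,e_3$ all restrict to the same $T$-root, with $e_i\in S_{\rho_i}$. They lie on a single affine line $e_0+\mathbb{Q} m_T$, so after relabelling we may assume their $m_T$-coefficients strictly decrease, giving $e_1-e_2=\lambda_{12}m_T$ and $e_2-e_3=\lambda_{23}m_T$ with $\lambda_{12},\lambda_{23}>0$. Applying the sign lemma to the pair $(e_1,e_2)$ forces $\langle n_{\rho_2},m_T\rangle>0$, while applying it to $(e_2,e_3)$ forces $\langle n_{\rho_2},m_T\rangle<0$, a contradiction.

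There is no real obstacle: the argument is a direct sign bookkeeping extension of the one already used to prove Proposition~\ref{sl}. The only point that needs care is verifying that the case $\rho_i=\rho_j$ (distinct roots on the same ray) is automatically excluded, and this is exactly where the hypothesis that $\Gamma_T$ contains no ray of $\sigma_X$ enters; the interior-intersection hypothesis is what guarantees both signs of $\langle n_\rho,m_T\rangle$ actually occur, so the conclusion "at most two" is sharp rather than vacuous.
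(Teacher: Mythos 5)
Your proof is correct and follows essentially the same route as the paper's: the paper also excludes $\rho_i=\rho_j$ by noting that $\langle n_{\rho_i},e_i-e_j\rangle=0$ would force the ray into $\Gamma_T$, and then derives the contradiction by pairing both differences $\widehat{e}_1-\widehat{e}_2=\lambda m_T$ and $\widehat{e}_2-\widehat{e}_3=\mu m_T$ against the single vector $n_{\rho_2}$, obtaining $\lambda\langle n_{\rho_2},m_T\rangle>0$ and $\mu\langle n_{\rho_2},m_T\rangle<0$. Your ``sign lemma'' is just a repackaging of that same two-line computation, so there is nothing substantively different to report.
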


\begin{proof}
Suppose that the restrictions of $\mathbb{T}$-roots
$\widehat{e}_1\in S_{\rho_1},\widehat{e}_2\in S_{\rho_2}$ and
$\widehat{e}_3\in S_{\rho_3}$ coincide. If~${\rho_i=\rho_j}$, then
$\langle n_{\rho_i},e_i-e_j\rangle=0$ and the ray $\rho_i$ is
contained in $\Gamma_{T}$. So $\rho_i\ne\rho_j$ whenever $i\ne j$.
We may assume without loss of generality that
$\widehat{e}_1-\widehat{e}_2=\lambda m_{T}$ and
$\widehat{e}_2-\widehat{e}_3=\mu m_{T}$, where $\lambda>0$ and
$\mu>0$. Then \,\,${\lambda\langle
n_{\rho_2},m_{T}\rangle=1+\langle
n_{\rho_2},\widehat{e}_1\rangle>0}$\,\, and \,\,${\mu\langle
n_{\rho_2},m_{T}\rangle=-1-\langle
n_{\rho_2},\widehat{e}_3\rangle<0}$. So we get a contradiction.
\end{proof}

\begin{proposition}  \label{sl'}   Suppose that ${\Gamma_{T}\cap \sigma_{X}}$ is a face of the cone
$\sigma_{X}$ of positive dimension. Then

\begin{itemize}

\item[$a)$] ${\pi(S_{\rho_1})\cap \pi(S_{\rho_2})=\varnothing}$
whenever $\rho_1\ne\rho_2$.

\item[$b)$] If a ray $\rho$ is not contained in ${\Gamma_{T}\cap
\sigma_{X}}$, then $\left.\pi\right|_{S_{\rho}}:
S_{\rho}\rightarrow \pi(S_{\rho})$ is bijective.

\item[$c)$] If $\rho\subseteq{\Gamma_{T}\cap \sigma_{X}}$, then
for any $e\in\pi(S_{\rho})$ there are infinitely many elements of
$S_{\rho}$, having $e$ as their restriction, and root vectors
corresponding to $e$ form an infinite dimensional vector space.
\end{itemize}
\end{proposition}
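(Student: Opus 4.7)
My plan is to normalize the choice of $m_{T}$ using the face hypothesis, deduce parts (a) and (b) from a single sign calculation, and for (c) to exhibit an explicit infinite family of $\mathbb{T}$-roots in $S_{\rho}$ with equal restriction to $T$. Since $\Gamma_{T}\cap\sigma_{X}$ is a face of $\sigma_{X}$, $\Gamma_{T}$ is a supporting hyperplane; after swapping the sign of $m_{T}$ if necessary, I may assume $\langle p,m_{T}\rangle\geqslant 0$ for all $p\in\sigma_{X}$, with equality exactly on $\Gamma_{T}\cap\sigma_{X}$. In particular, for every ray $\rho\in\sigma_{X}(1)$ one has $\langle n_{\rho},m_{T}\rangle>0$ if $\rho\not\subset\Gamma_{T}$ and $\langle n_{\rho},m_{T}\rangle=0$ otherwise. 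Since $\mathbb{Z}m_{T}$ is the kernel of the restriction map on characters, two $\mathbb{T}$-roots with the same restriction differ by $\lambda m_{T}$ for some rational $\lambda$.

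For parts (a) and (b), given $e_{i}\in S_{\rho_{i}}$ with $\pi(e_{1})=\pi(e_{2})$ and $e_{1}-e_{2}=\lambda m_{T}$, I would pair against $n_{\rho_{1}}$ and $n_{\rho_{2}}$ to obtain $\lambda\langle n_{\rho_{1}},m_{T}\rangle=-1-\langle n_{\rho_{1}},e_{2}\rangle$ and $\lambda\langle n_{\rho_{2}},m_{T}\rangle=1+\langle n_{\rho_{2}},e_{1}\rangle$. For (a), when $\rho_{1}\ne\rho_{2}$ the Demazure root conditions give $\langle n_{\rho_{1}},e_{2}\rangle\geqslant 0$ and $\langle n_{\rho_{2}},e_{1}\rangle\geqslant 0$, so the two identities force $\lambda\langle n_{\rho_{1}},m_{T}\rangle\leqslant -1$ and $\lambda\langle n_{\rho_{2}},m_{T}\rangle\geqslant 1$. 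Under the normalization this is impossible: either one of the left-hand pairings vanishes (if the corresponding ray lies in $\Gamma_{T}$) and an immediate contradiction results, or both pairings are strictly positive and $\lambda$ would have to be simultaneously negative and positive. For (b), taking $\rho_{1}=\rho_{2}=\rho\not\subset\Gamma_{T}\cap\sigma_{X}$ in the first identity yields $\lambda\langle n_{\rho},m_{T}\rangle=0$ with $\langle n_{\rho},m_{T}\rangle>0$, so $\lambda=0$ and $e_{1}=e_{2}$; surjectivity of $\pi|_{S_{\rho}}$ onto its image is tautological.

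For (c), assume $\rho\subseteq\Gamma_{T}\cap\sigma_{X}$ and fix $e\in S_{\rho}$. I would verify that $e+km_{T}\in S_{\rho}$ for every $k\in\mathbb{Z}_{\geqslant 0}$: pairing with $n_{\rho}$ is unchanged since $\langle n_{\rho},m_{T}\rangle=0$, and for any other ray $\rho'$ we get $\langle n_{\rho'},e+km_{T}\rangle=\langle n_{\rho'},e\rangle+k\langle n_{\rho'},m_{T}\rangle\geqslant 0$ since both summands are nonnegative by the normalization. All of these $\mathbb{T}$-roots share the restriction $\pi(e)$, so the $\mathbb{T}$-homogeneous LNDs $\partial_{e+km_{T}}$ are $T$-homogeneous LNDs of degree $\pi(e)$; they are linearly independent because $\partial_{e+km_{T}}$ sends each $\chi^{m}$ to a monomial of the distinct $\mathbb{T}$-weight $m+e+km_{T}$. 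Hence the fiber of $\pi$ over $\pi(e)$ inside $S_{\rho}$ is infinite and the space of root vectors of $T$-degree $\pi(e)$ is infinite dimensional.

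The main obstacle I anticipate is the case split in (a) when one or both of the rays $\rho_{i}$ lies in $\Gamma_{T}$: there the coefficient $\langle n_{\rho_{i}},m_{T}\rangle$ vanishes, so the argument cannot conclude from comparing signs of $\lambda$ alone and the contradiction must instead be read directly from the fact that the right-hand side of the corresponding identity is nonzero. Once this case analysis is organized cleanly, the rest of the argument is routine bookkeeping.
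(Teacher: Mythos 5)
Your argument for parts $a)$ and $b)$ is exactly the paper's: normalize $m_{T}$ so that $\langle\,\cdot\,,m_{T}\rangle\geqslant 0$ on $\sigma_{X}$ with equality precisely on the face $\Gamma_{T}\cap\sigma_{X}$, write $e_{1}-e_{2}=\lambda m_{T}$, and pair against $n_{\rho_1}$ and $n_{\rho_2}$ to get the two incompatible signs; your explicit handling of the subcase where one of the pairings $\langle n_{\rho_i},m_{T}\rangle$ vanishes is correct and in fact slightly more carefully spelled out than in the paper. The first half of $c)$ (the fiber of $\pi$ over $\pi(e)$ inside $S_{\rho}$ is infinite, via $e+km_{T}\in S_{\rho}$ for all $k\geqslant 0$) also matches the paper.

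There is, however, one missing step in the last claim of $c)$. Exhibiting infinitely many linearly independent root vectors $\partial_{e+km_{T}}$ does not by itself show that the root vectors of $T$-degree $\pi(e)$ \emph{form} a vector space: a linear combination of locally nilpotent derivations need not be locally nilpotent, so you must justify that $\sum_{k}c_{k}\partial_{e+km_{T}}$ is again an LND before you can speak of an infinite dimensional space of root vectors. The paper closes this by observing that two homogeneous LNDs attached to the same ray $\rho$ commute (both act on $\chi^{m}$ through the same factor $\langle n_{\rho},m\rangle$, and each shifts $\langle n_{\rho},\cdot\rangle$ by $-1$), and a finite sum of pairwise commuting LNDs is an LND. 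Adding that one observation completes your proof; everything else is sound.
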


\begin{proof} $a), b)$ If ${\Gamma_{T}\cap \sigma_{X}}$ is a face of the cone
$\sigma_{X}$, then $\langle p,m_{T}\rangle\geqslant0$ for all
$p\in \sigma_{X}$. Suppose $\widehat{e}_1\in S_{\rho_1}$,
$\widehat{e}_2\in S_{\rho_2}$, and
$\widehat{e}_1-\widehat{e}_2=\lambda m_{T}$, where $\lambda>0$.
Then $\lambda\langle n_{\rho_1},m_{T}\rangle=-1-\langle
n_{\rho_1},\widehat{e}_2\rangle\leqslant0$ and ${\lambda\langle
n_{\rho_2},m_{T}\rangle=1+\langle
n_{\rho_2},\widehat{e}_1\rangle\geqslant0}$. It is possible if and
only if $\rho_1=\rho_2\subseteq{\Gamma_{T}\cap \sigma_{X}}$.

$c)$ Let $\widehat{e}\in S_{\rho}$, where
$\rho\subseteq{\Gamma_{T}\cap \sigma_{X}}$. The vector
$\widehat{e}+\lambda m_{T}$ is a root if and only if it belongs to
the lattice and ${\langle
n_{\rho'},\widehat{e}\rangle+\lambda\langle
n_{\rho'},m_{T}\rangle\geqslant 0}$ for each $\rho'\ne\rho$. So
there are infinite many roots, whose restrictions are equal to
restriction of $\widehat{e}$. It remains to note that if
$\,\widehat{e}_1, \widehat{e}_2\in S_{\rho}$, then corresponding
LNDs commute and hence their sum is again LND.
\end{proof}

\begin{proposition} Suppose $\Gamma_{T}$ intersects the interior of the cone
$\sigma_{X}$ and contains some rays of $\sigma_{X}$. Then
\begin{itemize}

\item[$a)$] If $\rho\not\subset{\Gamma_{T}\cap \sigma_{X}}$, then
any $e\in\pi(S_{\rho})$ is the restriction of at most two
$\mathbb{T}$-roots.

\item[$b)$] If $\rho\subset{\Gamma_{T}\cap \sigma_{X}}$, then any
$e\in\pi(S_{\rho})$ is the restriction of a finite number $k_e$ of
$\mathbb{T}$-roots, there exists $e$ with $k_e\geqslant2$, and
root vectors corresponding to $e$ form a $k_e$-dimensional vector
space.

\end{itemize}
\end{proposition}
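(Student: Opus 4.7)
The plan is to refine the sign analysis of Propositions~\ref{sl''} and~\ref{sl'} and then count root vectors via the decomposition Lemma~\ref{dec}. Normalise $m_{T}\in M$ to be the primitive generator of the kernel of $M\to M_{T}$, so that $\Gamma_{T}=\{\langle\cdot,m_{T}\rangle=0\}$ and each fibre of $\pi$ lies on a line $\widehat{e}_{0}+\mathbb{Z}\,m_{T}\subset M$. The rays of $\sigma_{X}$ split into three groups according to the sign of $\langle n_{\rho'},m_{T}\rangle$; the hypothesis that $\Gamma_{T}$ meets the interior of $\sigma_{X}$ guarantees that both the positive and the negative sign classes are nonempty, while the zero class consists exactly of the rays in $\Gamma_{T}\cap\sigma_{X}$.

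For part (a), fix $\rho\not\subset\Gamma_{T}$ and $e\in\pi(S_{\rho})$, and choose $\widehat{e}_{1}\in S_{\rho}\cap\pi^{-1}(e)$. Suppose two further preimages $\widehat{e}_{2}\in S_{\rho_{2}}$ and $\widehat{e}_{3}\in S_{\rho_{3}}$ also exist. Pairing $n_{\rho}$ and $n_{\rho_{i}}$ against $\widehat{e}_{1}-\widehat{e}_{i}=\lambda_{i}m_{T}$ in the style of Proposition~\ref{sl''} forces $\rho_{i}\neq\rho$ and shows that $\langle n_{\rho},m_{T}\rangle$ and $\langle n_{\rho_{i}},m_{T}\rangle$ are nonzero of opposite signs, so $\langle n_{\rho_{2}},m_{T}\rangle$ and $\langle n_{\rho_{3}},m_{T}\rangle$ share a common sign. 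Applying the same pairing to $\widehat{e}_{2}-\widehat{e}_{3}$ then forces them to have opposite signs (if $\rho_{2}\neq\rho_{3}$) or to both vanish (if $\rho_{2}=\rho_{3}$), either possibility contradicting the previous step.

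For part (b), with $\rho\subset\Gamma_{T}$ and $\widehat{e}\in S_{\rho}\cap\pi^{-1}(e)$, observe that any other preimage $\widehat{e}+\lambda m_{T}$ still satisfies $\langle n_{\rho},\cdot\rangle=-1$ because $\langle n_{\rho},m_{T}\rangle=0$, so it cannot lie in any $S_{\rho'}$ with $\rho'\neq\rho$; the entire fibre is thus contained in $S_{\rho}$. The $S_{\rho}$-inequalities $\langle n_{\rho'},\widehat{e}\rangle+\lambda\langle n_{\rho'},m_{T}\rangle\geqslant 0$ for $\rho'\neq\rho$ confine $\lambda$ to a bounded interval (an upper bound coming from rays with $\langle n_{\rho'},m_{T}\rangle<0$, a lower bound from rays with $\langle n_{\rho'},m_{T}\rangle>0$), yielding $k_{e}<\infty$. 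To realise $k_{e}\geqslant 2$, I would pick a lattice point $m^{*}$ in the relative interior of the facet of $\omega$ dual to $\rho$ (so $\langle n_{\rho},m^{*}\rangle=0$ and $\langle n_{\rho'},m^{*}\rangle>0$ for $\rho'\neq\rho$) and replace $\widehat{e}$ by $\widehat{e}+Nm^{*}$ with $N$ large; this inflates every slack $\langle n_{\rho'},\widehat{e}\rangle$ enough to ensure that both $\widehat{e}$ and $\widehat{e}+m_{T}$ satisfy all $S_{\rho}$-inequalities.

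For the dimensional statement, take any $T$-homogeneous LND $\partial$ of degree $e$ and apply Lemma~\ref{dec}: $\partial=\sum\partial_{\widehat{e}_{i}}$ with degrees on the segment $\Delta(\partial)\subset e+\mathbb{Q}\,m_{T}$, whose two vertices are LNDs and hence Demazure roots in $S_{\rho}$. Since $\langle n_{\rho},m_{T}\rangle=0$, the value $\langle n_{\rho},\cdot\rangle=-1$ is preserved along the segment; combining the endpoint inequalities with the sign of $\langle n_{\rho'},m_{T}\rangle$ for each $\rho'\neq\rho$ shows that every intermediate lattice point also satisfies the $S_{\rho}$-inequalities, so every $\widehat{e}_{i}$ lies in $S_{\rho}\cap\pi^{-1}(e)$. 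A short Leibniz-rule check shows that the space of $\mathbb{T}$-homogeneous derivations of a given Demazure root degree is one-dimensional, spanned by the standard $\partial_{\widehat{e}_{i}}$; since these $k_{e}$ LNDs pairwise commute (as noted in the proof of Proposition~\ref{sl'}(c)), every $\mathbb{K}$-linear combination is again an LND, giving a $k_{e}$-dimensional space of root vectors. The most delicate step I expect is this last one --- in particular, the verification that no non-Demazure-root intermediate degree can carry a nonzero component of the decomposition --- which the identity $\langle n_{\rho},m_{T}\rangle=0$ settles cleanly.
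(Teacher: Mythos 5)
Your argument is correct and follows essentially the same route as the paper: the same pairing of $n_{\rho}$ and $n_{\rho_i}$ against differences $\lambda m_{T}$ to rule out a third preimage in part (a), and the same bounded interval of admissible $\lambda$ together with an explicit shift by $m_{T}$ in part (b). The one place you go beyond the paper is the $k_e$-dimensionality of the space of root vectors, where your use of Lemma~\ref{dec}, the convexity of the $S_{\rho}$-inequalities along the segment $\Delta(\partial)$, and the one-dimensionality of homogeneous derivations of a Demazure-root degree supplies a verification that the paper leaves implicit (it relies only on the commutation remark at the end of the proof of Proposition~\ref{sl'}); that added detail is sound.
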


\begin{proof}
$a)$ Suppose that $e=\pi(\widehat{e}_1)=\pi(\widehat{e}_2)$, where
$\widehat{e}_1\in S_{\rho}$, $\widehat{e}_2\in S_{\rho'}$,
$\rho\not\subset{\Gamma_{T}\cap \sigma_{X}}$, and
$\rho'\subset{\Gamma_{T}\cap \sigma_{X}}$. Then
${\widehat{e}_1-\widehat{e}_2=\lambda m_{T}}$ and hence $\langle
n_{\rho'},\widehat{e}_1\rangle=-1$. It is a contradiction. If
${e\in\pi(S_{\rho_1})\cap\pi(S_{\rho_2})\cap\pi(S_{\rho_3})}$,
where $\rho_i\not\subset{\Gamma_{T}\cap \sigma_{X}}$, we can use
the same arguments as in Proposition \ref{sl''}.

$b)$ Let us show that any $e=\pi(\widehat{e})$, where
$\widehat{e}\in S_{\rho}$ and $\rho\subset{\Gamma_{T}\cap
\sigma_{X}}$, is the restriction of a finite number of elements.
As above, the vector $\widehat{e}+\lambda m_{T}$ is a root if and
only if it belongs to the lattice $N$ and $\langle
n_{\rho'},\widehat{e}\rangle+\lambda\langle
n_{\rho'},m_{T}\rangle\geqslant 0$ for any $\rho'\ne\rho$. In this
case there are rays $\rho'$ with both positive and negative values
$\langle n_{\rho'},m_{T}\rangle$. Hence only a finite number of
$\lambda$ satisfies these conditions. In conclusion, let
$\widehat{e}\in S_{\rho}$ be a root such that ${\langle
n_{\rho'},\widehat{e}\rangle}\geqslant-{\langle
n_{\rho'},m_{T}\rangle}$ whenever $\langle
n_{\rho'},m_{T}\rangle<0$. Then $\widehat{e}+ m_{T}$ is
$\mathbb{T}$-root and $k_e\geqslant2$ for $e=\pi(\widehat{e})$.
\end{proof}

\begin{example} Consider $X=\mathbb{A}^3$ with the standard action of the torus
$\mathbb{T}=(\mathbb{K}^{\times})^3$. Let
${T=\{(s_1,s_1,s_2)\,\mid\, s_1, s_2\in \mathbb{K}^{\times}\}}$ be
a subtorus of codimension one. In this case the
hyperplane~$\Gamma_{T}$ is generated by vectors $(1,1,0)$ and
$(0,0,1)$. It can be easily checked that
$$\mathfrak{R}_T=\{(a,b), (c,-1)\,\,\mid\,\,
a\in\mathbb{Z}_{\geqslant-1},\, b,c\in
\mathbb{Z}_{\geqslant0}\},$$ each root $(a,b)$ is the restriction
of two $\mathbb{T}$-roots, and each root $(c,-1)$ is the
restriction of $c+1$ $\mathbb{T}$-roots.
\end{example}

Thus we have a complete description of the restriction of roots in
the case, when ${\sigma_{X}\cap\Gamma_{T}}$ is a face of the
cone~$\sigma_{X}$. If the hyperplane $\Gamma_{T}$ intersect the
interior of $\sigma_{X}$, the answers to Questions $1$ and $3$ are
remain unknown for $T$-roots
$e\in\pi(S_{\rho_1})\cap\pi(S_{\rho_2})$, where
$\rho_1,\rho_2\not\subset \Gamma_{T}$. The following proposition
gives some sufficient condition on $\Gamma_{T}$, under which there
are infinitely many root vectors corresponding to $e$. In
Section~\ref{s5} we show that this condition is also necessary in
the case of affine toric surfaces.

\begin{proposition} \label{nhrv} Suppose
that  $e_1\in S_{\rho_1}$, $e_2\in S_{\rho_2}$, $\rho_1\ne\rho_2$,
$\pi(e_1)=\pi(e_2)$, and $\langle n_{\rho_1}, m_{T}\rangle=-1$.
Then the linear map ${\partial:\mathbb{K}[X]\rightarrow
\mathbb{K}[X]}$ given by
\begin{equation}\partial\chi^m=\chi^{m+e_2}(\alpha\langle n_{\rho_1},m\rangle\chi^{m_{T}}+\beta\langle n_{\rho_2},m\rangle)
(\alpha\chi^{m_{T}}-\beta\langle n_{\rho_2},m_{T}\rangle)^{\langle
n_{\rho_1},e_2\rangle},\,\,\,\mbox{for
all}\,\,\,m\in\omega_{M},\end{equation} is a $T$-homogeneous LND
of degree $\pi(e_1)$ for every $\alpha,\beta\in \mathbb{K}$.
\end{proposition}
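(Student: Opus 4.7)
My plan is to verify that the formula defines a well-defined map $\mathbb{K}[X] \to \mathbb{K}[X]$ and that this map is $T$-homogeneous of degree $\pi(e_1)$, is a derivation, and is locally nilpotent. I abbreviate $a = \langle n_{\rho_1}, m\rangle$, $b = \langle n_{\rho_2}, m\rangle$, $r = \langle n_{\rho_2}, m_T\rangle$, $c = \langle n_{\rho_1}, e_2\rangle$, and treat $u = \chi^{m_T}$ as a formal symbol. Writing $e_1 - e_2 = \lambda m_T$ and using the hypothesis $\langle n_{\rho_1}, m_T\rangle = -1$ forces $c = \lambda - 1 \ge 0$; positivity of the integer $\lambda$ follows from $e_i \in S_{\rho_i}$ together with $\rho_1 \ne \rho_2$, so the exponent in the formula is a nonnegative integer.

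I would dispatch the first three properties together by a binomial expansion of $(\alpha u - \beta r)^c$. This exhibits $\partial(\chi^m)$ as a $\mathbb{K}$-linear combination of monomials $\chi^{m + e_2 + k m_T}$ for $0 \le k \le c+1$, with the extremes $(-\beta r)^c\,\beta b\,\chi^{m+e_2}$ (at $k = 0$) and $\alpha^{c+1}\, a\,\chi^{m+e_1}$ (at $k = c+1$). Since $\pi(m_T) = 0$, each monomial carries $T$-degree $\pi(m) + \pi(e_1)$, giving $T$-homogeneity of the claimed degree. Regularity in $\mathbb{K}[\omega_M]$ is the usual Demazure argument: the coefficients $a$ and $b$ vanish precisely on the rays $\rho_1$ and $\rho_2$, which is what forces every summand with nonzero coefficient to lie in $\omega_M$. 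The Leibniz rule is then immediate because the only $m$-dependent factor, $(\alpha a u + \beta b)$, is $\mathbb{Z}$-linear in $m$, so substituting $m = m_1 + m_2$ splits the expression additively.

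Local nilpotency is the main obstacle, and my strategy is to exhibit a factorization $\partial = f^c \cdot D$ with $D$ a locally nilpotent derivation on $\mathbb{K}[X]$ and $f \in \ker D$; then the identity $(f^c D)^n = f^{cn} D^n$, valid whenever $Df = 0$, transports local nilpotency of $D$ to $\partial$. The remark preceding the proposition provides the template: on $\mathbb{A}^3$ with $c = 1$ one finds $\partial = (\alpha x_2 - \beta x_1)(\alpha \partial_{x_1} + \beta \partial_{x_2})$ with $f = \alpha x_2 - \beta x_1$ and $D = \alpha \partial_{x_1} + \beta \partial_{x_2}$ satisfying $Df = 0$, and a direct computation on $\mathbb{A}^n$ in the case of arbitrary $c$ and $r = 1$ yields $\partial = (\alpha x_2 - \beta x_1)^c(\alpha \partial_{x_1} + \beta \partial_{x_2})$. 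The pattern for general $r$ emerging from analogous $\mathbb{A}^n$ calculations is that $f$ is a $T$-homogeneous polynomial built from two $\rho_i$-compatible generators with coefficients dictated by $r$, while $D$ is a linear combination of two ``simple'' derivations attached to the same rays, locally nilpotent via a triangular action on these generators; $Df = 0$ will reduce to a small coefficient-matching identity. The hardest sub-step, which I anticipate will absorb the bulk of the technical work, is producing $(f, D)$ in the general toric setting where $X$ need not be $\mathbb{A}^n$, by passing to a rank-two toric slice adapted to $(\rho_1, \rho_2, \Gamma_T)$; once this is in place, the identification $\partial = f^c D$ is a coefficient-by-coefficient verification between the binomial expansion obtained above and the expansion of $f^c D$ on $\chi^m$.
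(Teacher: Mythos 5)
The first half of your argument (well-definedness, the Leibniz rule, $T$-homogeneity of degree $\pi(e_1)$, and the identity $e_1-e_2=(\langle n_{\rho_1},e_2\rangle+1)m_T$) is sound and is essentially what the paper disposes of in one line. The genuine gap is in the local nilpotency step, which is where all the content of the proposition lies. Your plan rests on a factorization $\partial=f^c\cdot D$ with $f\in\ker D\cap\mathbb{K}[X]$ and $D$ locally nilpotent, but neither ingredient is available in the general toric setting. First, $\chi^{m_T}$ is not a regular function on $X$ when $\Gamma_T$ meets the interior of $\sigma_X$ (the case of interest), so $f$ would have to be of the form $\alpha\chi^{g+m_T}-\beta\langle n_{\rho_2},m_T\rangle\chi^{g}$ for some $g\in M$ with $cg=e_2-\tilde e_2$ and $g,\,g+m_T\in\omega_{M}$; the existence of such $g$ (equivalently, of auxiliary roots $\tilde e_1\in S_{\rho_1}$, $\tilde e_2\in S_{\rho_2}$ with $\tilde e_1-\tilde e_2=m_T$ positioned compatibly with $e_2$) imposes sign conditions against every ray of $\sigma_X$ and is nowhere established. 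Second, and more seriously, your candidate $D$ is exactly the $c=0$ instance of the proposition being proved, namely $\alpha\partial_{\tilde e_1}+\beta\partial_{\tilde e_2}$, a linear combination of Demazure root vectors attached to two \emph{different} rays. Such derivations do not commute, and a sum of non-commuting LNDs need not be locally nilpotent (on $\mathbb{A}^2$ the sum $y\frac{\partial}{\partial x}+x\frac{\partial}{\partial y}$ is not). The ``triangular'' change of coordinates that makes $\alpha\frac{\partial}{\partial x_1}+\beta\frac{\partial}{\partial x_2}$ obviously nilpotent is special to $\mathbb{A}^n$, is not torus-equivariant, and does not transport to a general semigroup algebra $\mathbb{K}[\omega_{M}]$; nor is it clear that a ``rank-two toric slice'' exists that carries $\partial$. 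So the reduction is circular at the base case and conjectural at the factorization step.

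For comparison, the paper proves local nilpotency head-on: it computes $\partial^{k+1}\chi^m$ explicitly as $\chi^{m+(k+1)e_2}(\alpha\chi^{m_{T}}-\beta\langle n_{\rho_2},m_{T}\rangle)^{(k+1)\langle n_{\rho_1},e_2\rangle}\sum_{j}\alpha^j\beta^{k-j+1}l_j^{(k+1)}\chi^{jm_{T}}$ with a two-term recursion for the coefficients $l_j^{(k)}$, and then verifies that all $l_j^{(k)}$ vanish once $k$ exceeds an explicit bound depending on $\langle n_{\rho_1},m\rangle$ and $\langle n_{\rho_2},m\rangle$. If you want to rescue your approach, you would at minimum have to prove the $c=0$ case directly (already a non-commuting sum, requiring an argument of this recursive type) and then justify the existence and regularity of $f$; at that point you will have reproduced essentially the paper's computation.
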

\begin{proof} By assumptions, ${e_1-e_2=(\langle n_{\rho_1},e_2\rangle+1)m_{T}}$ and $\partial$ is a $T$-homogeneous derivation on $A$. By direct calculation we
get
$$\partial^{k+1}\chi^m=\chi^{m+(k+1)e_2}(\alpha\chi^{m_{T}}-\beta\langle n_{\rho_2},m_{T}\rangle)^{(k+1)\langle n_{\rho_1},e_2\rangle}\sum_{j=0}^{k+1}
\alpha^j\beta^{k-j+1}l_j^{(k+1)}\chi^{jm_{T}},$$ where
${l_j^{(k+1)}=(\langle
n_{\rho_1},m\rangle-j+1)l_{j-1}^{(k)}+(\langle
n_{\rho_2},m\rangle+j\langle
n_{\rho_2},m_{T}\rangle-k)l_j^{(k)}}$, ${l_1^{(1)}=\langle
n_{\rho_1},m\rangle}$, and ${l_0^{(1)}=\langle
n_{\rho_2},m\rangle}$. Note that $l_j^{(k)}=0$ whenever
$k\geqslant\langle n_{\rho_2},m\rangle+d\langle n_{\rho_2},
m_{T}\rangle+1$ and $j\leqslant d$, and $l_i^{(k)}=0$ whenever
$k\geqslant\langle n_{\rho_1},m\rangle+1$ and $i\geqslant\langle
n_{\rho_1},m\rangle+1$. Thus if $k=\langle
n_{\rho_2},m\rangle+\langle n_{\rho_2},m_{T}\rangle\langle
n_{\rho_1},m\rangle$, then $\partial^{k+1}\chi^m=0$, and
$\partial$ is locally nilpotent.
\end{proof}

\section{\label{s5} Affine toric surfaces}

Normal affine surfaces with $\mathbb{C}^{\times}$-actions and LNDs
on them were studied in \cite{FZ1}, \cite{FZ2}. Here we consider a
particular case. Namely, let X be an affine toric surface with the
acting torus $\mathbb{T}$ and $T\subset\mathbb{T}$ be a
one-dimensional subtorus. We give complete answers to Questions
1--3 for $X$. Let $N_{T}$ and $N_{\mathbb{T}}$ be the lattices of
one parameter subgroups of tori $T$ and $\mathbb{T}$ respectively,
and $\widehat{\sigma}_{X}\subset (N_{\mathbb{T}})_{\mathbb{Q}}$ be
the cone corresponding to $X$. Up to automorphism of the lattice
$N_{\mathbb{T}}$ we may assume that the cone
$\widehat{\sigma}_{X}$ is generated by the vectors
$n_{\rho_1}=(1,0)$ and $n_{\rho_2}=(a,b)$, where $a,b\in
\mathbb{Z}_{\geqslant 0}$, $a<b$, and $\gcd(a,b)=1$.
\begin{center}
\begin{picture}(100,75)
\linethickness{0.01mm} \put(30,15){\vector(1,0){90}}
\put(50,0){\vector(0,1){70}} \put(80,60){$\rho_2$}
\put(110,5){$\rho_1$} \put(130,45){$\widehat{\sigma}_{X}\subset
N_{\mathbb{Q}}$} \put(58,5){$(1,0)$} \put(65,35){$(a,b)$}
\linethickness{0.3mm} \put(50,15){\line(1,0){65}}
 \put(50,15){\line(1,2){28}}
\put(50,15){\vector(1,0){20}} \put(50,15){\vector(1,2){15}}
\end{picture}
\end{center}

The set of Demazure roots of $\widehat{\sigma}_{X}$ has the form:
$$S_{\rho_1}=\{(-1,m)\mid m\in \mathbb{Z},\,m\geqslant\frac
ab\},\,\,\, S_{\rho_2}=\{(m_1,m_2)\mid m_1,m_2\in
\mathbb{Z},\,m_1\geqslant0,\, am_1+bm_2=-1\}.$$ Let $\Gamma_{T}$
be the line corresponding to the subtorus $T$. There are three
alternatives for relative position of the cone
$\widehat{\sigma}_{X}$ and the line $\Gamma_{T}$:

\begin{enumerate}

\item $\widehat{\sigma}_{X}\cap\Gamma_{T}=\{0\}$; \item
$\widehat{\sigma}_{X}\cap\Gamma_{T}$ is a ray of
$\widehat{\sigma}_{X}$; \item $\Gamma_{T}$ intersects the interior
of $\widehat{\sigma}_{X}$.
\end{enumerate}

{\bfseries Case 1.} Suppose
$\widehat{\sigma}_{X}\cap\Gamma_{T}=\{0\}$. It follows from
Proposition \ref{sl} that
 there is only one (up to scalar) root vector for each
$\,T$-root $e$, only one $\mathbb{T}$-root has $e$ as its
restriction,  and all $\,T$-homogeneous LNDs are
$\mathbb{T}$-homogeneous as well. In the following figure we
illustrate the $T$-roots of $X$. The restrictions of the elements
of $S_{\rho_1}$ and $S_{\rho_2}$ are denoted by $"\bullet"$ and
$"\circ"$ respectively.

\vspace{0.05cm}
\begin{picture}(100,45)
\multiput(160,30)(20,0){4}{\circle*{3}}
\multiput(250,30)(10,0){5}{\circle{3}}
\put(140,30){\vector(1,0){165}}
\end{picture}
\vspace{-0.3cm}

{\bfseries Case 2.} Assume that the intersection of the cone
$\widehat{\sigma}_{X}$ with $\Gamma_{T}$ is the ray~$\rho_1$. In
this case the restriction of the set $S_{\rho_1}$ is just the
point $-1$. By Proposition \ref{sl'} the root vectors
corresponding to $e=-1$ form an infinite dimensional vector space.
For each root $e\ne-1$ there is only one $\mathbb{T}$- root, whose
restriction equals $e$. The set of the $T$-roots looks like:

\vspace{0.05cm}
\begin{picture}(100,45)
\put(170,30){\circle*{4}} \multiput(190,30)(20,0){6}{\circle{3}}
\put(140,30){\vector(1,0){165}} \put(160,18){$-1$}
\end{picture}
\vspace{-0.3cm}

{\bfseries Case 3.} Suppose $\Gamma_{T}$ intersects the interior
of the cone $\widehat{\sigma}_{X}$. Then the restriction $\pi$ on
each set $S_{\rho_i}$ is injective and the number $D=rb-qa$ is
positive for primitive lattice vector $(r,q)$ of the line
$\Gamma_{T}$. Let us prove the following lemma.

\begin{lemma} Suppose the line $\Gamma_{T}$ intersects the interior of the
cone $\widehat{\sigma}_{X}$. Then the intersection
${\pi(S_{\rho_1})\cap \pi(S_{\rho_2})}$ is not empty if and only
if $a-1$  is divisible by $\gcd(q,rb-qa)$.
\end{lemma}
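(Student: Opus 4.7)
The plan is to translate the geometric non-emptiness condition into a single arithmetic congruence on an integer parameter $\lambda$, and then to reconcile the $\gcd$ that appears with the $\gcd$ stated in the lemma.

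First I would set up notation. Because $\Gamma_{T}$ meets the interior of $\widehat{\sigma}_{X}$, I choose the primitive lattice vector $(r,q)$ to lie in that interior, so automatically $q>0$ and $D=rb-qa>0$. The kernel of the character restriction $M\to M_{T}$ is $\mathbb{Z}\cdot m_{T}$ with primitive $m_{T}=(q,-r)\in M$. Two roots $\widehat{e}_{1}\in S_{\rho_{1}}$ and $\widehat{e}_{2}\in S_{\rho_{2}}$ satisfy $\pi(\widehat{e}_{1})=\pi(\widehat{e}_{2})$ precisely when $\widehat{e}_{1}-\widehat{e}_{2}=\lambda m_{T}$ for some $\lambda\in\mathbb{Z}$.

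Second, I would parametrize. Writing $\widehat{e}_{1}=(-1,m)$ and $\widehat{e}_{2}=(m_{1},m_{2})$, the equation $\widehat{e}_{1}-\widehat{e}_{2}=\lambda(q,-r)$ gives $m_{1}=-1-\lambda q$ and $m_{2}=m+\lambda r$; substituting into $am_{1}+bm_{2}=-1$ collapses to the scalar identity $bm=a-1-\lambda D$. Both inequalities $m_{1}\geq 0$ and $m\geq a/b$ then reduce, using $q,D>0$, to $\lambda\leq -1$, and the only remaining condition is that $m$ be an integer, i.e.
\begin{equation*}
\lambda D\equiv a-1\pmod{b}.
\end{equation*}
Integrality of $m_{2}$ is automatic once $m$ is an integer.

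Third, the image of the map $\lambda\mapsto\lambda D\bmod b$ is the subgroup $\gcd(D,b)\mathbb{Z}/b\mathbb{Z}$, and its integer preimage is an arithmetic progression unbounded in both directions. Hence the congruence has an integer solution if and only if $\gcd(D,b)\mid a-1$, and any such solution can be shifted to be $\leq -1$, so the inequality constraint becomes free.

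Finally, I would reconcile $\gcd(D,b)$ with the stated $\gcd(q,D)$. From $D\equiv -qa\pmod{b}$ together with $\gcd(a,b)=1$, I get $\gcd(D,b)=\gcd(qa,b)=\gcd(q,b)$; from $D\equiv rb\pmod{q}$ together with $\gcd(r,q)=1$, I get $\gcd(q,D)=\gcd(q,rb)=\gcd(q,b)$. So both equal $\gcd(q,b)$, and the lemma follows. The only subtle step is this last $\gcd$ manipulation, which is where the primitivity hypotheses $\gcd(a,b)=1$ and $\gcd(r,q)=1$ genuinely enter; everything else is routine bookkeeping.
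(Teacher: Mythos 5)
Your proof is correct. It takes the same overall route as the paper --- reduce non-emptiness of $\pi(S_{\rho_1})\cap\pi(S_{\rho_2})$ to a solvable linear Diophantine condition --- but organizes the arithmetic differently. The paper writes the two image sets as arithmetic progressions $\{-r+mq\}$ and $\{rm_1^0+qm_2^0+kD\}$, equates them, and multiplies the resulting equation by $a$ so that the obstruction $\gcd(q,D)\mid a-1$ can be read off directly (using $\gcd(r,q)=1$ one way and $\gcd(q,D)\mid b$, $\gcd(a,b)=1$ the other way). You instead work upstairs in $M$ with the kernel vector $m_T=(q,-r)$, eliminate variables to get the single congruence $\lambda D\equiv a-1 \pmod{b}$, and then need the extra identity $\gcd(D,b)=\gcd(q,b)=\gcd(q,D)$ to match the statement; that identity is exactly where $\gcd(a,b)=1$ and $\gcd(r,q)=1$ enter for you, and it is a clean, correct computation. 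Two small things your version does better: you verify explicitly that the inequality constraints ($m_1\geqslant 0$ and $m\geqslant a/b$) both collapse to $\lambda\leqslant -1$ and that the solution set of the congruence is unbounded below, so the inequalities impose no further restriction --- a point the paper's proof passes over in silence when it asserts that suitable $m_0$ and $k_0$ exist. No gaps.
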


\begin{proof} The restrictions of $S_{\rho_1}$ and $S_{\rho_2}$
have the form $\{-r+mq\mid m\in \mathbb{Z}_{\geqslant\frac ab}\}$
%\,\,\,\mbox{and}\,\,\,
and ${\{rm_1^0+qm_2^0+kD\mid k\in \mathbb{Z}_{\geqslant0}\}},$
where $(m_1^0,m_2^0)\in S_{\rho_2}$ is a root, respectively. The
intersection of these two sets is non-empty if and only if there
exist numbers $m_0$ and $k_0$ such that
\begin{equation}\label{int} r+rm_1^0+qm_2^0=m_0q-k_0D.
\end{equation} Multiplying this equation by $a$ and using the fact
that $(m_1^0,m_2^0)\in S_{\rho_2}$, we get
\begin{equation}\label{int2} r(a-1)-Dm_2^0=m_0qa-k_0Da.
\end{equation}
So if $a-1$ is not  divisible by $\gcd(q,D)$, we obtain a
contradiction. Conversely, suppose that the left part of
\eqref{int2} is divisible by $\gcd(q,D)$. Since $b$ is divisible
by $\gcd(q,D)$ and $\gcd(a,b)=1$, the left part of \eqref{int} is
also divisible by $\gcd(q,D)$. Hence there exist $m_0$ and $k_0$
such that \eqref{int} holds.
\end{proof}

Let us draw the set of the $T$-roots as above.

\vspace{0.75cm}
\begin{picture}(100,30)
\put(70,25){$a)$} \multiput(130,30)(20,0){7}{\circle*{3}}
\multiput(140,30)(40,0){3}{\circle{3}}
\put(100,30){\vector(1,0){165}} \put(300,25){$\gcd(q,rb-qa)\nmid
a-1$}
\end{picture}

\vspace{0.06cm}
\begin{picture}(100,30)
\put(70,25){$b)$} \multiput(130,30)(20,0){7}{\circle*{3}}
\multiput(140,30)(60,0){2}{\circle{3}}
\multiput(170,30)(60,0){2}{\circle{5}}
\put(100,30){\vector(1,0){165}} \put(300,25){$\gcd(q,rb-qa)\mid
a-1$}
\end{picture}
\vspace{-0.3cm}

In case $a)$ the restriction of roots from $\mathbb{T}$ to $T$ is
bijective and any $T$-homogeneous LND is $\mathbb{T}$-homogeneous.
In case $b)$ denote by $\Lambda$ the intersection
${\pi(S_{\rho_1})\cap \pi(S_{\rho_2})}$. For the roots $e\notin
\Lambda$ we can use Theorem \ref{mt}. It remains to answer
Questions $1$ and $3$ for $e\in \Lambda$.

Now we recall a description of $T$-varieties of complexity one.
Let $N$ and $M$ be two mutually dual lattices with the pairing
denoted by $\langle,\rangle$, $\sigma$ be a cone in
$N_{\mathbb{Q}}$, $\omega\subset M_{\mathbb{Q}}$ be its dual cone,
and $C$ be a smooth curve. Consider a divisor
$\mathfrak{D}=\sum\limits_{z\in C} \Delta_{z}\cdot z,$ whose
coefficients $\Delta_{z}$ are polyhedra in $N_{\mathbb{Q}}$ with
tail cone~$\sigma$. Such objects are called {\itshape
$\sigma$-polyhedral divisors}, see~\cite{AH}. For every
${m\in\omega_{M}}$ \,\,we can get the $\mathbb{Q}$-divisor
$\mathfrak{D}(m)=\sum\limits_{z\in C} \min\limits_{p\in\Delta_z}
\langle p,m\rangle\cdot z.$ One can define the $M$-graded algebra
$${A[C,\mathfrak{D}]=\bigoplus_{m\in \omega_{M}} A_m\chi^m,}
\,\,\,\,\mbox{where}\,\,\, A_m=H^0(C,\mathfrak{D}(m))$$ and the
multiplication is determined in natural way. It follows
from~\cite{AH} that $A[C,\mathfrak{D}]$ is a normal affine domain
and that every normal affine $M$-graded domain A with $\td A = \rk
M+1$ is equivariantly isomorphic to $A[C,\mathfrak{D}]$ for some
$C$ and $\mathfrak{D}$.

Also we need a description of homogeneous LNDs of horizontal type
for $\mathbb{T}$-variety $X$ of complexity one. Below we follow
the approach given in \cite{AL}. We have
$\mathbb{K}[X]=A[C,\mathfrak{D}]$ for some $C$ and~$\mathfrak{D}$.
It turns out that $C$ is isomorphic to $\mathbb{A}^1$ or
$\mathbb{P}^1$ whenever there exists a homogeneous LND of
horizontal type on $A[C,\mathfrak{D}]$, see \cite[Lemma 3.15]{L1}.
Let $C$ be $\mathbb{A}^1$ or $\mathbb{P}^1$,
$\mathfrak{D}=\sum\limits_{z\in C}\Delta_z\cdot z$ a
$\sigma$-polyhedral divisor on $C$, $z_0\in C$, $z_{\infty}\in
C\backslash\{z_0\}$, and $v_z$ a vertex of $\Delta_z$ for every
$z\in C$. Put $C'=C$ if $C=\mathbb{A}^1$ and
$C'=C\backslash\{z_{\infty}\}$ if $C=\mathbb{P}^1$. A collection
$\widetilde{\mathfrak{D}}=\{\mathfrak{D},z_0; v_z, \forall z\in
C\}$ if $C=\mathbb{A}^1$ and
$\widetilde{\mathfrak{C}}=\{\mathfrak{C},z_0,z_{\infty}; v_z,
\forall z\in C'\}$ if $C=\mathbb{P}^1$ is called a {\itshape
colored } $\sigma$-polyhedral divisor on $C$ if the following
conditions hold:

$(1)$ $v_{\deg}:=\sum\limits_{z\in C'} v_z$ is a vertex of $\deg
\mathfrak{D}\hspace{-0.15cm}\mid_{C'}:=\sum\limits_{z\in
C'}\Delta_z$;  \hspace{0.5cm} $(2)$ $v_z\in N$ for $z\ne z_0$.

Let $\widetilde{\mathfrak{D}}$ be a colored $\sigma$-polyhedral
divisor on $C$ and $\delta\subseteq N_{\mathbb{Q}}$ be the cone
generated by ${\deg
\mathfrak{D}\hspace{-0.15cm}\mid_{C'}-v_{\deg}}$. Denote by
$\widetilde{\delta}\subseteq (N\oplus\mathbb{Z})_{\mathbb{Q}}$ the
cone generated by $(\delta,0)$ and $(v_{z_0},1)$ if
$C=\mathbb{A}^1$, and by $(\delta,0)$, $(v_{z_0},1)$ and
$(\Delta_{z_{\infty}}+v_{\deg}-v_{z_0}+\delta,-1)$ if
$C=\mathbb{P}^1$. By definition, put $d$ the minimal positive
integer such that $d\cdot v_{z_0}\in N$. A pair
$(\widetilde{\mathfrak{D}},e)$, where $e\in M$, is said to be
{\itshape coherent } if

$(1)$ There exists $s\in \mathbb{Z}$ such that
$\widetilde{e}=(e,s)\in M\oplus \mathbb{Z}$ is a Demazure root of
the cone $\widetilde{\delta}$ with distinguished ray
$\widetilde{\rho}=(d\cdot v_{z_0},d)$.

$(2)$ $v(e)\geqslant 1+v_z(e)$ for every $z\in C'\backslash
\{z_0\}$ and every vertex $v\ne v_{z}$ of the polyhedron
$\Delta_{z}$.

$(3)$ $d\cdot v(e)\geqslant 1+v_{z_0}(e)$ for every vertex $v\ne
v_{z_0}$ of the polyhedron $\Delta_{z_0}$.

$(4)$ If $Y=\mathbb{P}^1$, then $d\cdot v(e)\geqslant -1-d\cdot
\sum\limits_{z\in Y'} v_{z}(e)$ for every vertex $v$ of the
polyhedron $\Delta_{z_{\infty}}$.

It follows from \cite[Theorem 1.10]{AL} that homogeneous LNDs of
horizontal type on $A[C,\mathfrak{D}]$ are in bijection with the
coherent pairs $(\widetilde{\mathfrak{D}},e)$.

Let us return to our case. Following \cite[Section 11]{AH} we will
show how to determine $C$ and $\mathfrak{D}$ such that
${X\stackrel{T}{\cong}}\spec A[C,\mathfrak{D}]$. Since
$T\subset\mathbb{T}$, we have an exact sequence
$$ \begin{CD}
0 @>>> N_{T} @>F>> N_{\mathbb{T}} @>P>> N_{\mathbb{T}}/N_{T} @>>>
0. \end{CD}$$ Let $\Sigma$ be the coarsest quasifan in
$(N_{\mathbb{T}}/N_{T})_{\mathbb{Q}}$ refining all cones
$P(\tau)$, where $\tau$ runs over all faces
of~$\widehat{\sigma}_{X}$. It follows from \cite[Section~6]{AH}
that the desired curve $C$ is the toric variety corresponding
to~$\Sigma$. Let us choose the projection $s:
N_{\mathbb{T}}\rightarrow N_{T}$, which satisfies $s\circ F=id$.
The divisor $\mathfrak{D}$ on $C$ such that
$X\stackrel{T}{\cong}\spec A[Y,\mathfrak{D}]$ is $
\mathfrak{D}=\sum_{\rho\in \Sigma(1)} \Delta_{\rho}\cdot
D_{\rho},$ where $\Sigma(1)\subset\Sigma$ is the set of
one-dimensional cones, $D_{\rho}$ is the prime divisor
corresponding to $\rho$, $\Delta_{\rho}=s(\widehat{\sigma}_{X}\cap
P^{-1}(n_{\rho}))\subset (N_{T})_{\mathbb{Q}}$, and $n_{\rho}$ is
the primitive lattice vector on $\rho$. Here all $\Delta_{\rho}$
are $\sigma$-tailed polyhedra with
$\sigma=s(\widehat{\sigma}_{X}\cap (F(N_{T}))_{\mathbb{Q}}$. In
our case

$$F= \binom rq
 \,\, ,\,\, s=(u,v) \,\,\,\mbox{and}\,\,\,  P=( q,-r),$$ where
 $u,v \in \mathbb{Z}$, $ru+qv=1$. By direct calculation we
 obtain $Y=\mathbb{P}^1$, $\sigma=\mathbb{Q}_{\geqslant0}$, and
 $$\mathfrak{D}=\left(p_1+\sigma\right)\cdot
[0]+\left(p_2+\sigma\right)\cdot [\infty],$$ where $p_1=\dfrac uq$
and $p_2=\dfrac{au+bv}{rb-qa}$. In this case there is no
homogeneous LND of fiber type on $A[C,\mathfrak{D}]$. Homogeneous
LNDs of horizontal type  of degree $e$ are in bijection with the
coherent pairs $(\widetilde{\mathfrak{D}},e)$. If neither $p_1$
nor $p_2$ belong to $\mathbb{Z}$, then we should put either
$z_0=0$ and $z_{\infty}=\infty$, or $z_0=\infty$ and
$z_{\infty}=0$ in the definition of colored $\sigma$-polyhedral
divisor $\widetilde{\mathfrak{D}}$. It means that there exist at
most two LNDs of degree $e$. So for a root $e\in \Lambda$ there
are two root vectors and they are $\mathbb{T}$-homogeneous. It is
clear that $p_1$ is integer if and only if $q$ equals 1. Let us
find out when $p_2$ is integer.

\begin{lemma} The number $p_2=\dfrac{au+bv}{rb-qa}$ is integer if and
only if $rb-qa=1$.
\end{lemma}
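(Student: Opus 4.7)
The plan is to use the Bezout-type relation $ru+qv=1$ (which comes from $s\circ F=\mathrm{id}$) together with the definition $D:=rb-qa$ to reduce the divisibility question about $au+bv$ to a divisibility question about $a$ and $b$ separately, and then invoke $\gcd(a,b)=1$.

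Concretely, I would first handle the easy direction: if $D=1$, then $p_2=au+bv\in\mathbb{Z}$ is immediate. For the nontrivial direction, suppose $D\mid (au+bv)$. The key trick is to multiply $au+bv$ by $r$ and by $q$ and eliminate one of the variables at each step using the definition of $D$. Since $rb=D+qa$, we get
\[
r(au+bv)=rau+rbv=rau+(D+qa)v=a(ru+qv)+Dv=a+Dv.
\]
Similarly, using $qa=rb-D$,
\[
q(au+bv)=qau+qbv=(rb-D)u+qbv=b(ru+qv)-Du=b-Du.
\]

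These two identities are the heart of the argument. Assuming $D$ divides $au+bv$, the first identity forces $D\mid a$ and the second forces $D\mid b$. But $\gcd(a,b)=1$ (this is part of the standing normalization of $\widehat{\sigma}_X$), so $D=1$, as required.

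There is essentially no obstacle here beyond spotting the right linear combinations; the two identities above do all the work. I would just need to mention explicitly at the start that $ru+qv=1$ is exactly the condition $s\circ F=\mathrm{id}$ recorded earlier in the section, and that $D>0$ was established before the lemma, so $D\mid a$ and $D\mid b$ is a genuine divisibility statement and not a triviality.
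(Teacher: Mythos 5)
Your proof is correct and follows essentially the same route as the paper: both arguments combine $ru+qv=1$ with $D=rb-qa$ to show $D\mid a$ and $D\mid b$, and then conclude $D=1$ from $\gcd(a,b)=1$ (your identities $r(au+bv)=a+Dv$ and $q(au+bv)=b-Du$ are just a reorganization of the paper's $a=D(rk-v)$, $b=D(qk+u)$). No gaps; the easy direction and the relevance of $\gcd(a,b)=1$ are both handled as in the paper.
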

\begin{proof} Denote $D=rb-qa$. Suppose $au+bv=kD$, where $k\in\mathbb{Z}$. We also
know that $ru+qv=1$. Using these equations, we get
$$a=aru+aqv=D(rk-v),\,\,\,\, b=bru+bqv=D(qk-u).$$ So $a$ and $b$
are divisible by $D$. This holds if and only if $D=1$.
\end{proof}

Now suppose $p_1\notin \mathbb{Z}$ and $p_2\in\mathbb{Z}$. Then
for any colored divisor either $z_0=0$ or $z_{\infty}=0$ holds. It
is not hard to check that in the former case the pair
$(\widetilde{\mathfrak{D}},e)$ is coherent if and only if ${\frac
1q+ep_1\in \mathbb{Z}}$ and $qep_2+1\geqslant0$. These conditions
do not depend on the choice of $z_{\infty}$. So if there is one
coherent pair $(\widetilde{\mathfrak{D}},e)$ with $z_0=0$, then
any pair $(\widetilde{\mathfrak{D}}',e)$ with $z_0=0$ and
$z_{\infty}\in \mathbb{P}^1\setminus\{z_0\}$ is coherent. Since
$p_2\in\mathbb{Z}$, we consider all $\widetilde{\mathfrak{D}}$
with $z_{\infty}=0$ as the same colored $\sigma$-polyhedral
divisor. Let $e\in \Lambda$. We already know two root vectors of
degree $e$, which are $\mathbb{T}$-homogeneous LNDs, whose degrees
belong to different $S_{\rho_i}$. Hence the inequality $z_0\ne
z_0'$ holds for corresponding coherent pairs
$(\widetilde{\mathfrak{D}},e)$ and
$(\widetilde{\mathfrak{D}}',e)$. Thus there are as many root
vectors corresponding to $e$ as points on $\mathbb{P}^1$. All
these root vectors are described by Proposition \ref{nhrv}. The
cases $p_1\in \mathbb{Z}$, $p_2\notin\mathbb{Z}$ and
$p_1,p_2\in\mathbb{Z}$ are analyzed similarly.

The following figure illustrates the lines $\Gamma_{T}$ with
$p_1\in \mathbb{Z}$ or $p_2\in \mathbb{Z}$.

\vspace{0.6cm}
\begin{center}
\begin{picture}(100,75)
\multiput(65,45)(15,0){3}{\circle*{2}}
\multiput(65,60)(0,15){2}{\circle*{2}}
\put(50,20){\vector(0,1){68}} \put(40,30){\vector(1,0){68}}
\put(43,23){\line(1,1){55}} \put(46,22){\line(1,2){30}}
\put(47,21){\line(1,3){20}} \put(42,26){\line(2,1){63}}
\put(41,27){\line(3,1){65}}
%$\put(17,70){$S_{\rho_1}$}
%\put(115,7){$S_{\rho_2}$}
%\put(100,70){$M_{\mathbb{Q}}=\mathbb{Q}^2$}
\linethickness{0.5mm} \put(50,30){\line(1,0){50}}
\put(50,30){\line(0,1){50}}
\end{picture}
\end{center}
\vspace{-0.5cm}

Summarizing, we obtain the following statement.

\begin{proposition} Let X be an affine toric surface with the acting torus
$\mathbb{T}$ and $T\subset\mathbb{T}$ be a one-dimensional
subtorus. Suppose that the cone $\widehat{\sigma}_{X}$
corresponding to $X$ is generated by $n_{\rho_1}=(1,0)$ and
$n_{\rho_2}=(a,b)$, where $a,b\in \mathbb{Z}_{\geqslant 0}$,
$a<b$, and $\gcd(a,b)=1$, and the line $\Gamma_{T}$ is generated
by $(r,q)$, where $\gcd(r,q)=1$. Let $\pi$ be the restriction of
roots from $\mathbb{T}$ to $T$, $e$ be a $T$-root,
$\Lambda={\pi(S_{\rho_1})\cap \pi(S_{\rho_2})}$, and $D=rb-qa$.
Then the following table describes the restriction of roots.
\begin{center}
\begin{tabular}{||l|c|c|c||}
\hline &The number of&The number of $\mathbb{T}$-roots &Are all
$T$-homogeneous\\
&root vectors&having $e$ as their &LNDs on $X$\\
&corresponding to $e$&restriction& $\mathbb{T}$-homogeneous?\\
\hline\hline
{\bfseries 1.}  $\widehat{\sigma}_{X}\cap \Gamma_{T}=\{0\}$&1&1& yes\\
\hline\hline {\bfseries 2.} $\widehat{\sigma}_{X}\cap
\Gamma_{T}=\mathbb{Q}_{\geqslant0}\cdot\rho_i$&&&\\
\hspace{1.2cm}$e=-1$&an infinite&$\infty$&no\\
&dimensional space&&\\
\hspace{1.2cm}$e\ne-1$&1&1&yes\\
\hline\hline {\bfseries 3.} $\widehat{\sigma}_{X}^{\circ}\cap \Gamma_{T}\ne\varnothing$&&&\\
{\bfseries 3.1.} $\gcd(q,D)\nmid a-1$$$&1&1& yes\\
\hline
{\bfseries 3.2.} $q\ne1, D\ne1$ and&&&\\
\hspace{0.5cm}$\gcd(q,D)\mid a-1$&&&\\
\hspace{1.2cm}$e\notin\Lambda$&1&$1$&yes\\
\hspace{1.2cm}$e\in\Lambda$&2&2&yes\\
\hline
{\bfseries 3.3.} $q=1$ or $D=1$&&&\\
\hspace{1.2cm}$e\notin\Lambda$&1&$1$&yes\\
\hspace{1.2cm}$e\in\Lambda$&as many as points&2&no\\
&on $\mathbb{P}^1$&&\\\hline
\end{tabular}
\end{center}
\end{proposition}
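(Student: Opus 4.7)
The plan is to assemble the table row by row from the results established earlier in the paper, since the preceding discussion has carried out most of the geometric analysis; what remains is to reference the correct ingredient for each case and check a couple of combinatorial identifications.

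For Case 1 I simply invoke Proposition \ref{sl}: when $\widehat\sigma_X\cap\Gamma_T=\{0\}$ the restriction of roots is bijective, every $T$-homogeneous LND is $\mathbb{T}$-homogeneous, and the uniqueness of the root vector per $\mathbb{T}$-root is part of the Demazure description recalled in Section~\ref{s1}. For Case 2 I use Proposition \ref{sl'}. Since $\Gamma_T$ contains exactly one ray, say $\rho_i$, every element of $S_{\rho_i}$ satisfies $\langle n_{\rho_i},e\rangle=-1$; because we are in dimension two, $m_T$ spans the line $\{m:\langle n_{\rho_i},m\rangle=0\}$, so the entire affine line $\{\langle n_{\rho_i},\cdot\rangle=-1\}$ maps under $\pi$ to the single point $-1$. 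Part (c) of Proposition \ref{sl'} then produces an infinite dimensional space of root vectors over $e=-1$ and infinitely many $\mathbb{T}$-roots restricting to it, while for $e\ne -1$ the $T$-root lies in $\pi(S_{\rho_j})$ with $\rho_j\not\subseteq\Gamma_T$, so parts (a)--(b) combined with Theorem \ref{mt} give the remaining entries.

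For Case 3 I first record that $\pi|_{S_{\rho_i}}$ is injective for $i=1,2$: if $\rho_i\not\subseteq\Gamma_T$, then two elements of $S_{\rho_i}$ differing by $\lambda m_T$ would force $\lambda\langle n_{\rho_i},m_T\rangle=0$ with $\langle n_{\rho_i},m_T\rangle\ne 0$, hence $\lambda=0$. Subcase 3.1 follows from the lemma immediately preceding the proposition: the condition $\gcd(q,D)\nmid a-1$ is equivalent to $\Lambda=\varnothing$, so every $T$-root is the restriction of a unique $\mathbb{T}$-root and Theorem \ref{mt} applies. For Subcases 3.2 and 3.3 I invoke the Altmann--Hausen presentation worked out just above the proposition, namely $X\cong\spec A[\mathbb{P}^1,\mathfrak{D}]$ with $\mathfrak{D}=(p_1+\sigma)\cdot[0]+(p_2+\sigma)\cdot[\infty]$, together with the (already noted) absence of fiber-type homogeneous LNDs; this reduces counting root vectors of degree $e$ to counting coherent pairs $(\widetilde{\mathfrak{D}},e)$ in the sense of Altmann--Liendo. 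For $e\notin\Lambda$ the pair is unique and Theorem \ref{mt} gives the $\mathbb{T}$-homogeneity. For $e\in\Lambda$ with $q\ne 1$ and $D\ne 1$, neither $p_1$ nor $p_2$ is integral, so the coherence conditions force $z_0$ to be a non-integer vertex and only the two choices $(z_0,z_\infty)=(0,\infty)$ and $(\infty,0)$ occur, reproducing the two known $\mathbb{T}$-homogeneous LNDs. When $q=1$ or $D=1$, the integrality lemma makes one of $p_1,p_2$ integral, and the coherence conditions become independent of the choice of $z_\infty$, so the coherent pairs are parametrized by $\mathbb{P}^1$; two of them are $\mathbb{T}$-homogeneous, and the remaining root vectors are exactly the non-homogeneous ones exhibited by Proposition \ref{nhrv}.

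The main obstacle is the bookkeeping in Subcase 3.3: one has to match the one-parameter family of coherent pairs, obtained by varying $z_\infty$, bijectively with the family of LNDs constructed in Proposition \ref{nhrv}, and verify that the Altmann--Liendo list is exhaustive so no further root vector is overlooked. Once this identification is carried out, the remaining cells of the table follow directly from Theorem \ref{mt}, Propositions \ref{sl}--\ref{sl'}, and the lemmas proved in the present section.
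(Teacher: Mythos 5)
Your proposal follows essentially the same route as the paper's own argument: Cases 1 and 2 from Propositions \ref{sl} and \ref{sl'}, Subcase 3.1 from the divisibility lemma, and Subcases 3.2--3.3 via the Altmann--Hausen presentation $X\cong\spec A[\mathbb{P}^1,\mathfrak{D}]$ with the count of coherent pairs governed by the integrality of $p_1=u/q$ and $p_2=(au+bv)/D$, supplemented by Proposition \ref{nhrv}. The bookkeeping you defer in Subcase 3.3 is exactly what the paper carries out (the explicit coherence conditions $\tfrac1q+ep_1\in\mathbb{Z}$, $qep_2+1\geqslant0$ being independent of $z_\infty$), so no essential idea is missing.
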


\section*{Acknowledgements}
The author is grateful to I.V. Arzhantsev for posing the problem
and permanent support. Thanks are also due to A. Liendo for useful
comments.


\begin{thebibliography}{15}

\bibitem{AH} K. Altmann and J. Hausen, Polyhedral divisors and algebraic torus actions, Math. Ann., 2006, 334, 557-- 607
\bibitem{AL} I. Arzhantsev and A. Liendo, Polyhedral divisors and
$SL_2$-actions on affine $\mathbb{T}$-varieties, arXiv:1105.4494v1
(2011), to appear in Michigan Math. J.
\bibitem{Co} D.Cox, The homogeneous coordinate ring of a toric variety, J. Algebraic Geom.,
1995, 4, 15--50
\bibitem{De} M. Demazure, Sous-groupes algebriques de rang maximum
du groupe de Cremona, Ann. Sci. Ecole Norm. Sup., 1970, 3,
507--588
\bibitem{FZ1} H. Flenner and M. Zaidenderg, Normal affine surfaces
with $\mathbb{C}^{*}$-actions, Osaka J. Math., 2003, 40, 981--1009
\bibitem{FZ2} H. Flenner and M. Zaidenderg, Locally nilpotent derivations on affine surfaces
with a $\mathbb{C}^{*}$-action, Osaka J. Math., 2005, 42, 931--974
\bibitem{L1} A. Liendo, Affine $\mathbb{T}$-varieties of complexity one and locally nilpotent derivations, Transform. Groups,
 2010, 15, no. 2, 389--425
\bibitem{L1'} A. Liendo, $\mathbb{G}_a$-actions of fiber type on affine
$\mathbb{T}$-varieties, J. Algebra, 2010, 324, 3653--3665
\bibitem{L2} A. Liendo, Roots of the affine Cremona group,
Transform. Groups, 2011, 16, no. 4, 1137--1142
\bibitem{Po} V.L. Popov, Problems for problem session,
Affine Algebraic Geometry, Contemporary Math., 2005, 369, 12--16
\bibitem{Sh} I. Shafarevich, On some infinite dimensional algebraic groups, Rend. Math. Appl., 1966, 25, no.2, 208--212

\end{thebibliography}
\end{document}